\numberwithin{equation}{section}
\renewcommand{\phi}{\varphi}
\renewcommand{\rho}{\varrho}
\newcommand{\ZZ}{\mathbb{Z}}
\newcommand{\QQ}{\mathbb{Q}}
\newcommand{\F}{\mathbb{F}}
\renewcommand{\leq}{\leqslant}
\renewcommand{\geq}{\geqslant}
\renewcommand{\b}{\mathbf{b}}
\newcommand{\ve}{\varepsilon}
\newcommand{\uu}{\underline}
\newcommand{\abs}[1]{\left\lvert#1\right\rvert}
\newcommand{\aff}{\operatorname{aff}}
\newtheorem{theorem}{Theorem}[section]
\newtheorem{corollary}[theorem]{Corollary}
\newtheorem{lemma}[theorem]{Lemma}
\newtheorem{conjecture}[theorem]{Conjecture}
\newtheorem{proposition}[theorem]{Proposition}
\theoremstyle{definition}
\newtheorem{remark}[theorem]{Remark}
\begin{document}
	\author{Matteo Verzobio}
	\address{IST Austria\\
		Am Campus 1\\
		3400 Klosterneuburg\\
		Austria}
	
	\email{matteo.verzobio@gmail.com}	
	\title{Counting rational points on smooth hypersurfaces with high degree}
	\subjclass[2020]{11D45, 11D41}
	\begin{abstract}
		Let $X$ be a smooth projective hypersurface defined over $\mathbb{Q}$. We provide new bounds for rational points of bounded height on $X$. In particular, we show that if $X$ is a smooth projective hypersurface in $\mathbb{P}^n$ with $n\geq 4$ and degree $d\geq 50$, then the set of rational points on $X$ of height bounded by $B$ have cardinality $O_{n,d,\varepsilon}(B^{n-2+\varepsilon})$. If $X$ is smooth and has degree $d\geq 6$, we improve the dimension growth conjecture bound. We achieve an analogue result for affine hypersurfaces whose projective closure is smooth.
	\end{abstract}
	\maketitle
	\thispagestyle{empty}
	\setcounter{tocdepth}{1}
	\section{Introduction}
	Let $X$ be a projective geometrically irreducible hypersurface defined over $\QQ$ of degree $d\geq 2$ in $\mathbb{P}^n$. The uniform dimension growth conjecture \cite[Conjecture 2]{HBannals}, stated by Heath-Brown, asserts that 
	\begin{equation}\label{eq:dimgrowth}
		N(X,B)\ll_{n,d,\ve} B^{n-1+\ve},
	\end{equation} 
	where $N(X,B)$ counts the number of rational points on $X$ of height at most $B$. This conjecture is now a theorem \cite[Theorem 0.3]{salberger} by Salberger for $d\geq 4$, or by Heath-Brown \cite[Theorem 2]{HBannals} for $d=2$. The conjecture remains open for $d=3$: however, Salberger proved that it holds if the error term is allowed to depend on $X$ \cite[Theorem 0.1]{salberger}. The aim of this paper is to improve \eqref{eq:dimgrowth} under suitable conditions on $X$. Specifically, our goal is to show that if $X$ is smooth, then $N(X,B)\ll_{n,d,\ve} B^{n-2+\ve}$. This has been proven for $n \geq 28$ by Marmon in \cite[Theorem 1.2]{marmon28} and our objective is to extend the result to $n\geq 4$, subject to additional assumptions on $d=\deg X$.
	
	Let $X\subseteq \mathbb{P}^n$ be a projective variety. Define
	\[
	N(X,B)=\#\left\{\uu{x}=(x_0,x_1,\dots, x_n)\in X(\QQ): x_i\in \ZZ, \begin{array}{l}
		\abs{x_i}\leq B\, \forall \, 0\leq i\leq n,\\ 
		\gcd_{0\leq i\leq n}(x_i)=1\end{array}
	\right\}.
	\]
	We will work with a fixed $B\geq 2$ and we will say that $\uu{x}\in X$ is of bounded height if it belongs to the set above. For $d\geq 6$, define the decreasing positive function
	\begin{equation}\label{eq:theta}
		\theta(d)=\begin{cases}
			0 &\text{ if } d\geq 50,\\
			\frac{11}{4\sqrt[3]{d}}-\frac{3}{4} &\text{ if } 50> d\geq 20,\\
			\frac{3}{\sqrt[3]{d}}+\frac{1}{3\sqrt{d}}-\frac{11}{12}&\text{ if } 20> d\geq 16,\\
			\frac{3}{\sqrt[3]{d}}+\frac{2}{3\sqrt{d}}-1 &\text{ if } 16> d\geq 9,\\
			\frac{2}{\sqrt{d}} &\text{ if } 9> d\geq 6.
		\end{cases}
	\end{equation}
	\begin{theorem}\label{thm:main}
		Let $d\geq 6$ and $n\geq 4$.
		Let $X$ be a smooth projective hypersurface defined over $\QQ$ of degree $d$ in $\mathbb{P}^n$. Then, for all $\ve>0$,
		\[
		N(X,B)\ll_{n,d,\ve} B^{n-2+\theta(d)+\ve} \text{ if } n\geq 5,
		\]
		and 
		\[
		N(X,B)\ll_{d,\ve} B^{2+\max\left\{0,\frac{45}{16\sqrt{d}}-\frac{3}{4}\right\}+\ve} \text{ if } n=4.
		\]
	\end{theorem}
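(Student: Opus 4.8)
The plan is to run the (global) determinant method of Heath-Brown and Salberger to replace $X$ by lower-dimensional intersections, to use smoothness to control the few subvarieties of $X$ that could carry abnormally many points, and to feed everything into an induction whose base case is a variety of small dimension, where the determinant method is sharpest and where the function $\theta(d)$ is produced. First I would isolate this exceptional contribution. A set of $\gg B^{n-2}$ rational points can only be supported on a subvariety $W\subseteq X$ of dimension $n-2$ that is linear or of very small degree. Since $X$ is smooth of degree $d\ge 2$, it contains no linear space $\PP^{k}$ with $2k>n-1$; as $n-2>(n-1)/2$ for $n\ge 4$, this already forbids a $\PP^{n-2}\subseteq X$, and the Fano scheme of lines on $X$ has (expected) dimension $2n-3-d$, which is negative in the relevant ranges. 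Hence the union $E$ of the positive-dimensional linear subspaces and the low-degree special subvarieties of $X$ contributes $\ll B^{(n+1)/2}$ when $n\ge 5$ and $\ll B^{2}$ when $n=4$; in both cases this is $\ll B^{n-2}$ and harmless, so it remains to treat the points off $E$.

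For the generic points I would argue by induction on $n$, the engine of each step being the determinant method combined with a hyperplane-section (fibration) argument. Slicing $X$ by a pencil of rational hyperplanes produces sections of degree $d$ and dimension $n-2$; by Bertini all but $\deg X^{\vee}=d(d-1)^{n-1}=O_{d,n}(1)$ of them are smooth, and the finitely many singular sections are absorbed directly by the dimension growth theorem at cost $\ll B^{n-2+\ve}$. Because the smooth sections are sections of the \emph{fixed} smooth $X$, they inherit the same scarcity of linear subspaces, so the inductive hypothesis applies and returns $B^{(n-3)+\theta(d)+\ve}$ per section; combining this with the determinant-method control on the number of relevant sections yields $N(X,B)\ll B^{(n-2)+\theta(d)+\ve}$. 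Crucially the degree $d$ is unchanged at every step, so $\theta(d)$ does not accumulate and the $\theta$-part of the final exponent is independent of $n$.

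The function $\theta(d)$ is then produced entirely at the base of the induction, once $X$ has been cut down to a threefold in $\PP^{4}$ (and, for the smallest $d$, one further step to a smooth surface in $\PP^{3}$). There I would apply the global determinant method in its sharpest form: on a surface the generic points lie on $\ll B^{c/\sqrt{d}+\ve}$ auxiliary curves, while on a threefold one gains the stronger saving $B^{c/\sqrt[3]{d}+\ve}$, each auxiliary intersection being estimated by the corresponding curve or surface bound. The piecewise shape of $\theta(d)$ — with $d^{-1/3}$ contributions for larger $d$ and $d^{-1/2}$ contributions for $d\in[6,9)$ — reflects the choice, made separately on each range of $d$, of whether to run the method at the threefold or the surface level and of the optimal auxiliary degree; the break-points $d=9,16,20,50$ are exactly the crossovers of these optimizations, and $\theta(d)=0$ for $d\ge 50$ is the threshold beyond which the auxiliary degree can be taken large enough that the number of auxiliary hypersurfaces is $B^{\ve}$. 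The separate, weaker constant $\tfrac{45}{16\sqrt{d}}-\tfrac34$ in the $n=4$ statement appears because there one has one fewer dimension available to fibre through and must run the surface-level method less efficiently.

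The main obstacle is precisely this base-case optimization: extracting the exact $\theta(d)$ demands a careful bookkeeping of the number and degrees of the auxiliary hypersurfaces produced by the determinant method on surfaces and threefolds, together with an honest count of the line, conic and other low-degree components of the singular intersections $X\cap Y$, which lie outside the hypotheses of Salberger's theorem and must be bounded by hand — these are exactly where the $d^{-1/2}$ and $d^{-1/3}$ losses enter. A secondary difficulty is the uniformity of the exceptional-locus estimate through the induction: one must ensure that passing to a smooth section never creates a large family of lines, which again follows from the bound on the dimension of the Fano scheme of $X$ together with the smoothness of the generic section.
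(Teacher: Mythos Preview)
Your overall architecture --- induct on $n$ by hyperplane slicing, absorb the finitely many singular slices by dimension growth, and produce $\theta(d)$ at a low-dimensional base case via the determinant method --- matches the paper's. Two genuine gaps, however, prevent the proposal from going through as written.

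\textbf{The base case does not come from the ``sharpest standard form'' of the determinant method.} Neither Salberger's global determinant method on threefolds (as in \cite{salberger}) nor Heath-Brown's $p$-adic method gives the exponent $\theta(d)$; the paper computes both alternatives in a remark and they are strictly worse for $d\ge 9$. What produces $\theta(d)$ is a \emph{new} variant (Proposition~\ref{lem:aff4r}): one runs Salberger's argument on an affine threefold but inserts an extra non-singular prime $r$ of tunable size, so that the auxiliary hypersurfaces have degree $O(K/(qr))$ rather than $O(K/q)$, at the cost of summing over $\#Y(\FF_r)\ll r^3$ residue classes. The piecewise form of $\theta(d)$, and the breakpoints $9,16,20,50$, come from optimising $r$ separately on each range (with $r\asymp B^{\ve}$ for $d\ge 125$, various powers of $B$ for $9\le d<125$, and a completely different surface-slicing argument for $6\le d\le 8$). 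Your sketch attributes the breakpoints to a choice ``of whether to run the method at the threefold or the surface level and of the optimal auxiliary degree'', which is not the actual mechanism. A second essential input you omit is the optimal-in-the-degree curve bound of Binyamini--Cluckers--Novikov \cite{optimal}: the threefold determinant method throws off curves of degree as large as $K^2/r^2$, and only Lemma~\ref{lem:poncurves} controls these uniformly; without it the argument does not close.

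\textbf{The exceptional-locus bound is not justified.} Your claim that the union $E$ of low-degree curves contributes $\ll B^{(n+1)/2}$ rests on the Fano scheme of \emph{lines}, but you must also cover conics and cubics, and for these no dimension bound on the parameter space is invoked. The paper instead uses that for $d>n+1$ the union of all curves of degree $\le 3$ is a proper closed subvariety of $X$ of degree $O_d(1)$ (this is \cite[Lemma~6.1]{salAENS}, a nontrivial input relying on $X$ being of general type), and then that any $(n-2)$-dimensional component of it has degree $\ge d$ by Noether--Lefschetz (Lemma~\ref{lem:NF}), so that dimension growth gives the contribution $\ll B^{n-2+\ve}$. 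Your argument as stated does not reach this conclusion, and for $d\le n+1$ (e.g.\ $d=6$, $n=5$) the general-type input fails and the paper handles the case separately. Finally, the $n=4$ constant $\tfrac{45}{16\sqrt d}-\tfrac34$ is not produced by ``running the surface-level method less efficiently'': it comes from a specific Siegel-lemma slicing into $O(B^{5/4})$ hyperplane sections together with a $15/16$-saving trick from \cite{salberger3fold}, and is logically independent of the $\theta(d)$ machinery.
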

	Notice that, for $d\geq 6$, our bound is strictly better than the one that follows from the uniform dimension growth conjecture, since $\frac{45}{16\sqrt{d}}-\frac{3}{4}<1$ and $\theta(d)< 1$ for $d\geq 6$. In the case when $d<6$, our method does not provide better bounds than the one from the dimension growth conjecture. The main reason is that we are not able to bound efficiently the rational points of bounded height that lie on irreducible curves of degree less than $4$ (see Lemma \ref{lem:remlines}). The following corollary follows on combining Theorem \ref{thm:main} with the aforementioned result by Marmon \cite[Theorem 1.2]{marmon28}.
	\begin{corollary}\label{cor:marm}
		Let $n,d\geq 4$ and let $\ve>0$. Let $X$ be a smooth projective hypersurface defined over $\QQ$ of degree $d$ in $\mathbb{P}^n$. If $d\geq 50$ or $n\geq 28$, then
		\begin{equation}\label{eq:main}
			N(X,B)\ll_{n,d,\ve} B^{n-2+\ve}.
		\end{equation}
	\end{corollary}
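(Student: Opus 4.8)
The plan is to prove the corollary by a case analysis on which of the two hypotheses, $d \geq 50$ or $n \geq 28$, is in force, quoting in each regime the sharp exponent already available. Since $n, d \geq 4$ and the smoothness of $X$ hold throughout, no further structural input on $X$ will be needed: the work reduces entirely to checking that the two stated ranges force the improved exponent to collapse to $n-2$.

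First I would dispose of the range $n \geq 28$. For such $n$ the estimate \eqref{eq:main} is precisely Marmon's bound \cite[Theorem 1.2]{marmon28} for smooth projective hypersurfaces of degree $d$ in $\mathbb{P}^n$, so it may be invoked verbatim. This lets me assume henceforth that $d \geq 50$ and $4 \leq n \leq 27$. Here the decisive fact is that the function $\theta$ defined in \eqref{eq:theta} satisfies $\theta(d) = 0$ for every $d \geq 50$. Consequently, for $5 \leq n \leq 27$, Theorem \ref{thm:main} delivers $N(X,B) \ll_{n,d,\ve} B^{n-2+\theta(d)+\ve} = B^{n-2+\ve}$, which is \eqref{eq:main}.

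The only remaining case is $n = 4$, where Theorem \ref{thm:main} instead gives $N(X,B) \ll_{d,\ve} B^{2 + \max\{0,\, \frac{45}{16\sqrt{d}} - \frac{3}{4}\} + \ve}$. I would verify that the correction term vanishes for $d \geq 50$: as $d \mapsto \frac{45}{16\sqrt{d}}$ is decreasing, it suffices to check the inequality $\frac{45}{16\sqrt{d}} \leq \frac{3}{4}$ at $d = 50$, where $\frac{45}{16\sqrt{50}} = \frac{9}{16\sqrt{2}} < \frac{3}{4}$. Hence the maximum equals $0$ and the bound becomes $N(X,B) \ll_{d,\ve} B^{2+\ve} = B^{n-2+\ve}$, completing all cases. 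I expect no genuine obstacle here: the entire content is carried by Theorem \ref{thm:main} and \cite{marmon28}, and the proof amounts to the elementary observations that $\theta$ vanishes on $[50,\infty)$ and that the $n=4$ correction likewise vanishes there, so that the two hypotheses together cover exactly the pairs $(n,d)$ for which the clean exponent $B^{n-2+\ve}$ is attained.
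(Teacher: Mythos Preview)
Your proposal is correct and follows exactly the approach indicated in the paper: the corollary is stated there as an immediate consequence of combining Theorem~\ref{thm:main} with Marmon's result \cite[Theorem~1.2]{marmon28}, and you have simply made the implicit case split explicit and verified the elementary inequalities $\theta(d)=0$ and $\tfrac{45}{16\sqrt{d}}\le\tfrac{3}{4}$ for $d\ge 50$.
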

	We believe that \eqref{eq:main} should hold for all $n\geq 4$ and $d\geq 3$, while it is easy to show that it cannot hold in general for $d\leq 2$ or $n\leq 3$.
	\begin{conjecture}
		Let $n\geq 4$ and $d\geq 3$. Let $X$ be a smooth projective hypersurface defined over $\QQ$ of degree $d$ in $\mathbb{P}^n$. For all $\ve>0$, 
		\[
		N(X,B)\ll_{n,d,\ve} B^{n-2+\ve}.
		\]
	\end{conjecture}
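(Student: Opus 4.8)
\emph{The plan} is to combine the determinant method with an induction on dimension, reducing the count on $X$ to counts on its hyperplane-type sections, and to isolate the rational points lying on low-degree curves as the crux. The target bound $N(X,B)\ll_{n,d,\ve}B^{n-2+\ve}$ gains a full power of $B$ over the dimension growth estimate \eqref{eq:dimgrowth}, so the whole point is to show that the points of bounded height do not fill up $X$ but instead concentrate on a controlled family of lower-dimensional loci.

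\emph{Step 1 (reduction to proper sections).} First I would apply Salberger's global determinant method to the smooth hypersurface $X$. The output I expect is that, outside an explicit exceptional locus, every rational point of height at most $B$ on $X$ lies on one of $\ll_{n,d,\ve}B^{\theta(d)+\ve}$ auxiliary hypersurfaces $H$ of bounded degree $D=D(n,d)$, none of which contains $X$. The exponent $\theta(d)$ arises from optimizing the degree of the auxiliary form against $B$: raising the auxiliary degree forces the points onto fewer hypersurfaces but enlarges the residual intersections, and the piecewise shape of $\theta(d)$ (with its $d^{-1/2}$ and $d^{-1/3}$ terms) simply records the best choice of parameters in each range $d\ge 50$, $50>d\ge 20$, and so on. Smoothness of $X$ is what guarantees the uniformity of this estimate, since there is no singular stratum to spoil the bounds.

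\emph{Step 2 (counting on the sections).} Each intersection $Y=X\cap H$ has dimension $n-2$ and degree $\ll_{n,d}1$. Decomposing $Y$ into geometrically irreducible components, every component of degree $\ge 2$ carries $\ll_{n,d,\ve}B^{n-2+\ve}$ points of bounded height by the dimension growth bound for varieties of dimension $n-2$ (the analogue of \eqref{eq:dimgrowth}, which is Salberger's theorem). Summing over the $\ll B^{\theta(d)+\ve}$ sections from Step 1 gives $N(X,B)\ll B^{n-2+\theta(d)+\ve}$, and hence the clean bound $B^{n-2+\ve}$ precisely in the range $d\ge 50$ where $\theta(d)=0$; the case $n=4$ runs identically with $Y$ a surface. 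The only components of $Y$ that escape this argument are the \emph{linear} ones, i.e.\ linear spaces contained in $X$, which feed directly into the next step.

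\emph{Step 3 (low-degree curves), the main obstacle.} The remaining contribution comes from rational points concentrating on curves $C\subset X$ of small degree: a line carries $\asymp B^2$ points, a conic $\asymp B$, and a twisted cubic $\asymp B^{2/3}$. For $n\ge 5$ these are each dominated by $B^{n-2}$, so one only needs that the families of such curves are not too large; for smooth $X$ of high degree this follows from the negativity of the expected dimension $2n-3-d$ of the Fano scheme of lines. This is exactly where the difficulty lies, and where I expect the argument to stop short of the full conjecture. When $d<6$ the Fano scheme can become positive-dimensional, so the ruled loci it sweeps out carry too many points of bounded height to absorb into $B^{n-2+\ve}$ — this is the content of Lemma \ref{lem:remlines} and the reason the unconditional method only reaches $d\ge 6$, and then with the loss $\theta(d)$ rather than the clean exponent. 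Closing the gap to the conjecture, namely removing $\theta(d)$ and extending to $d\in\{3,4,5\}$, would require a genuinely sharper count of the rational points lying on lines, conics, and cubics inside $X$, weighted by their individual point contributions, which the present method does not provide.
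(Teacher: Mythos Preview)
The statement you are addressing is a \emph{conjecture}: the paper states it as open and does not prove it. There is therefore no ``paper's own proof'' to compare against, and your proposal is not a proof either --- you yourself say in Step~3 that the argument ``stop[s] short of the full conjecture'' and that closing the gap ``the present method does not provide.'' What you have written is a heuristic sketch of why the bound is plausible and where the obstruction lies, not a proof of the conjecture for all $d\ge 3$. That is the fundamental gap.

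Even as a sketch of what the paper \emph{does} prove (Theorem~\ref{thm:main}), your Steps~1--2 misrepresent the method. The paper does \emph{not} apply the determinant method directly to $X\subset\mathbb{P}^n$ to produce $\ll B^{\theta(d)+\ve}$ auxiliary hypersurfaces containing the rational points. Instead, it slices $X$ with $O(B)$ hyperplanes to pass to affine hypersurfaces in $\mathbb{A}^{n-1}$, iterates this down to affine threefolds in $\mathbb{A}^4$ (Lemmata~\ref{lem:case5}--\ref{lem:indproj}), and only there applies Salberger's determinant method (Proposition~\ref{lem:aff4r}). The function $\theta(d)$ records the optimisation of a parameter $r$ in that threefold analysis together with the bounds from Lemmata~\ref{lem:poncurves} and~\ref{lem:surfaces}; it has nothing to do with balancing the degree of a global auxiliary form against $B$, and your account of its piecewise shape is incorrect. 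Your Step~2 also overlooks that, by Lemma~\ref{lem:NF}, no $(n-2)$-dimensional component of $X\cap H$ can be linear (or have any degree below $d$) when $X$ is smooth and $n\ge 4$, so ``linear components escaping dimension growth'' is not the mechanism at play; the genuine loss comes from curves of degree $\le 3$ at the threefold stage, exactly as in Lemma~\ref{lem:remlines}.
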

	\begin{remark}
		The bound of Theorem \ref{thm:main} is optimal (up to ignoring the $\ve$ in the exponent), in the sense that we cannot improve it for all $d\geq6$ and $n\geq 4$. For example, let $X\subseteq \mathbb{P}^5$ be defined by
		\[
		F(x_0,x_1,x_2,x_3,x_4,x_5)=x_0(x_0^d+x_5^d)+x_1(x_1^d+x_4^d)+x_2(x_2^d+x_3^d),
		\]
		that is smooth and contains the plane $x_0=x_1=x_2=0$. Thus, $N(X,B)\gg B^3$.
	\end{remark}
	\begin{remark}\label{rem:23}
		We briefly discuss the assumption $n\geq 4$. To prove \eqref{eq:main} for $n=2$, we would need to show that $N(C,B)\ll_{d,\ve}B^{\ve}$ for a smooth projective plane curve $C$. Since $d\geq 4$, the curve $C$ has genus strictly larger than $1$ and then \[N(C,B)\ll_C 1\] by Falting's Theorem. However, removing the dependence on $C$ and proving that $N(C,B)\ll_{d,\ve}B^{\ve}$ is a major open conjecture. In the case $n=3$, \eqref{eq:main} holds, for $d\geq 9$, if and only if $X$ does not contain any rational line \cite[Theorem 0.5]{salberger}.
	\end{remark}
	
	We also prove an affine analogue of Theorem \ref{thm:main}.
	Let $Y\subseteq \mathbb{A}^n$ be an affine variety. Define
	\[
	N_{\aff}(Y,B)=\#\{\uu{x}=(x_1,\dots, x_n)\in Y(\ZZ): \abs{x_i}\leq B\, \forall \,1\leq i\leq n\}.
	\]
	An affine analogue of the uniform dimension growth conjecture has been proved by Vermeulen in \cite{vermeulen2024dimension}, see \eqref{eq:affdimgrowth} for more details.
	\begin{theorem}\label{thm:mainaff}
		Let $n\geq 5$ and $d\geq 6$.
		Let $Y$ be an affine hypersurface defined over $\QQ$ of degree $d$ in $\mathbb{A}^n$ such that the projective closure of $Y$ is smooth. Then, for all $\ve>0$,
		\[
		N_{\aff}(Y,B)\ll_{n,d,\ve} B^{n-3+\theta(d)+\ve}.
		\]
	\end{theorem}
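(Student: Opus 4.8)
The plan is to deduce the affine statement Theorem~\ref{thm:mainaff} from the projective Theorem~\ref{thm:main} by passing to the projective closure and controlling the points that are added or lost at infinity. Let $\bar{Y}\subseteq\mathbb{P}^n$ denote the projective closure of $Y$; by hypothesis $\bar{Y}$ is a smooth projective hypersurface of degree $d$ in $\mathbb{P}^n$, and since $n\geq 5$ we are in the regime $n\geq 5$ of Theorem~\ref{thm:main}, so that $N(\bar{Y},B)\ll_{n,d,\ve}B^{n-2+\theta(d)+\ve}$. The affine integral points $(x_1,\dots,x_n)\in Y(\ZZ)$ with $\abs{x_i}\le B$ correspond, after homogenising with $x_0=1$, to rational points on $\bar{Y}$; the catch is that the height-$B$ affine box translates into a height roughly $B$ on the projective side but the count $N(\bar{Y},B)$ ranges over the full affine chart and over the hyperplane at infinity, whereas $N_{\aff}(Y,B)$ is restricted to $x_0=1$. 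The whole point of the affine improvement is that restricting to a single fibre of the projection $\uu{x}\mapsto x_0$ should save one full power of $B$.

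First I would make the fibration precise. Writing each affine point as $[1:x_1:\dots:x_n]$, I consider the map $\pi\colon\bar{Y}\dashrightarrow\mathbb{P}^1$ or more simply the projection to the last coordinate, slicing $Y$ by the hyperplanes $x_n=t$ for integers $\abs{t}\le B$. For each fixed $t$, the slice $Y\cap\{x_n=t\}$ is an affine hypersurface in $\mathbb{A}^{n-1}$, and its projective closure sits inside a hyperplane section of $\bar{Y}$. A generic hyperplane section of a smooth hypersurface is again smooth of the same degree by Bertini, so for all but $O_{n,d}(1)$ values of $t$ the slice has smooth projective closure and one may apply the \emph{projective} Theorem~\ref{thm:main} in dimension $n-1$ (valid since $n-1\ge 4$), giving $\ll_{n,d,\ve}B^{(n-1)-2+\theta(d)+\ve}=B^{n-3+\theta(d)+\ve}$ points on that slice. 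Summing the exceptional slices separately and the generic slices via this bound, the total is $\ll_{n,d,\ve}B^{n-3+\theta(d)+\ve}$, which is exactly the target exponent: one slices off a single variable, trading the dimension-$n$ projective bound for a dimension-$(n-1)$ bound and thereby gaining the extra power of $B$.

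The technical heart, and the step I expect to cause the most trouble, is handling the \emph{non-generic} hyperplane sections, where $\bar{Y}\cap\{x_n=t\}$ is singular. For such $t$ the slice is not covered by Theorem~\ref{thm:main}, so one cannot simply invoke the smooth projective bound; instead one must fall back on a cruder uniform estimate (for instance the affine dimension growth bound of Vermeulen, cited as \eqref{eq:affdimgrowth}, or Salberger's projective dimension growth result) which costs an extra factor of $B$. The saving grace is that the locus of $t$ for which the section is singular is contained in the dual variety intersected with the pencil, hence is cut out by the vanishing of a fixed nonzero polynomial in $t$ of degree $O_{n,d}(1)$; there are therefore only $O_{n,d}(1)$ bad integer values of $t$, and for each the inferior bound $\ll_{n,d,\ve}B^{n-2+\ve}$ on a dimension-$(n-1)$ slice is acceptable because it is not multiplied by any further power of $B$. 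One must check carefully that smoothness of $\bar{Y}$ forces the generic section to be geometrically irreducible and smooth (so Theorem~\ref{thm:main} genuinely applies, including that the section is not contained in a hyperplane in a degenerate way), and that the finitely many singular sections really do contribute at most $O_{n,d,\ve}(B^{n-2+\ve})=O_{n,d,\ve}(B^{n-3+\theta(d)+\ve})$ once one accounts for $\theta(d)\ge 0$; this final numerical compatibility, together with the uniformity of all implied constants in $t$, is where the argument must be executed with care.
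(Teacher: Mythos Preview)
There is a genuine arithmetic gap in the slicing step. You correctly observe that for all but $O_{n,d}(1)$ integers $t$ with $|t|\le B$ the slice $Y_t=Y\cap\{x_n=t\}$ has smooth projective closure in $\mathbb{P}^{n-1}$, and that (granting Theorem~\ref{thm:main} in dimension $n-1$) each such slice contributes $O_{n,d,\ve}(B^{(n-1)-2+\theta(d)+\ve})=O_{n,d,\ve}(B^{n-3+\theta(d)+\ve})$ points. But there are $O(B)$ values of $t$, so summing gives
\[
\sum_{|t|\le B} N_{\aff}(Y_t,B)\ll_{n,d,\ve} B\cdot B^{n-3+\theta(d)+\ve}=B^{n-2+\theta(d)+\ve},
\]
which overshoots the target exponent by exactly one. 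The heuristic in your first paragraph---that restricting the projective count to a single fibre $x_0=1$ saves a full power of $B$---is not something the projective bound on its own can deliver: $N_{\aff}(Y,B)$ and $N(\bar Y,B)$ differ only by a bounded factor, since every affine integer point gives a primitive projective point of the same height, so no saving is available without further input.

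The paper's proof (Lemmata~\ref{lem:case5} and~\ref{lem:indaff}) uses the same slicing $Y\mapsto Y_t$, but applies the \emph{affine} bound in $\mathbb{A}^{n-1}$ to each slice, which is one power of $B$ stronger: this gives $B^{(n-1)-3+\theta(d)+\ve}=B^{n-4+\theta(d)+\ve}$ per slice, and summing over $O(B)$ fibres lands exactly on $B^{n-3+\theta(d)+\ve}$. Of course this forces the argument to be an induction on $n$, and the entire content lies in the base case for threefolds in $\mathbb{A}^4$ (Proposition~\ref{lem:finalaff4r} and Lemma~\ref{rem:counterexample}), established directly via the determinant method and the curve bounds of~\cite{optimal}. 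Note finally that in the paper's logical structure the projective Theorem~\ref{thm:main} for $n\ge 5$ is \emph{deduced from} Theorem~\ref{thm:mainaff} (Lemma~\ref{lem:indproj}), not the other way round; so even setting aside the lost factor of $B$, invoking Theorem~\ref{thm:main} here as a black box would be circular.
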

	\begin{remark}\label{rem:addhyp}
		We briefly discuss the assumption $n\geq 5$. If $n=2,3$, then the bound $N_{\aff}(Y,B)\ll_{d,\ve}B^{n-3+\ve}$ clearly cannot hold, in general. Let $n=4$. Let $F_1(x_0,x_1,x_2)=0$ be a smooth projective curve $C$ of degree $d$ and let $f_1(x_1,x_2)=F_1(1,x_1,x_2)$. Consider the affine variety $Y$ defined by
		\[
		f(x_1,\dots,x_4)=f_1(x_1,x_2)+x_3x_4^{d-1}+x_3^{d-1}x_4.
		\] 
		The projective closure of $Y$ is defined by 
		\[F_1(x_0,x_1,x_2)+x_3x_4^{d-1}+x_3^{d-1}x_4,\] and it is smooth. The hypersurface $f_1(x_1,x_2)=x_3=0$ is contained in $Y$ and then \[N_{\aff}(Y,B)\gg N_{\aff}(\{f_1(x_1,x_2)=0\},B)B.\] In order to have the bound of Theorem \ref{thm:mainaff} for $N_{\aff}(Y,B)$, that is $N_{\aff}(Y,B)\ll_{d,\ve} B^{1+\ve}$, we would need to show that $N_{\aff}(\{f_1(x_1,x_2)=0\},B)\ll_{d,\ve} B^{\ve}$. As we already mentioned, this is an open conjecture.
		In Lemma \ref{rem:counterexample}, we prove an analogue of Theorem \ref{thm:mainaff} for $n=4$.
	\end{remark}
	
	We will prove Theorem \ref{thm:main} and \ref{thm:mainaff} via a modification of Salberger's determinant method \cite[Lemma 3.2]{salberger}. A key feature of this paper is that, rather than applying the determinant method to surfaces, as it is typically done, we apply it to threefolds. This leads to efficient bounds thanks to recent optimal bounds on the cardinality of integral points of bounded height on affine curves of very high degree by Binyamini, Cluckers, and Novikov \cite[Theorem 2]{optimal}. Indeed, when one applies the determinant method to threefolds, one has to bound the cardinality of rational points of bounded height on surfaces of bounded degree, which we will handle using \cite[Section 7]{salberger}, and on curves that may have very high degree. 
	
	The problem of bounding the cardinality of rational points of bounded height on smooth projective hypersurfaces is well-established and has been studied in works such as \cite{BHB}, \cite{BHB2}, \cite{salAENS}, \cite{salberger3fold}, and \cite{sal-wooley}. For the case of degree $3$, see \cite{cubic}. As already mentioned, Marmon in \cite{marmon28} handles the same problem for $n$ large, obtaining the bounds via the study of certain exponential sums. As is often the case, when $n$ is small and $d$ is large, the determinant method yields stronger bounds, and this remains true in our setting. In the case when $d$ is much larger than $n$ (say, roughly speaking, $d\geq n^n$), the approach used in \cite{sal-wooley} by Salberger and Wooley would lead to better bound; see \cite[Corollary 1.2]{sal-wooley}. Both our method and theirs rely on iteratively slicing the variety with hypersurfaces. The main difference is that we perform this slicing procedure using hyperplanes, while they use hypersurfaces that may have higher degree. In the case when $d$ is much larger than $n$, the number of hypersurfaces they use is smaller than the number from our method and so it leads to better results.

	We will adopt the following strategy. If $X$ is a smooth hypersurface in $\mathbb{P}^n$ (or $\mathbb{A}^n$), then by fixing one variable we reduce to studying hypersurfaces in $\mathbb{A}^{n-1}$. Up to some cases that we treat separately, we can assume that the projective closure of these affine hypersurfaces is smooth. We repeat this procedure to reduce to studying smooth hypersurfaces in $\mathbb{A}^4$. Using Salberger's determinant method \cite{salberger}, we show that integral points of bounded height lie on a set of surfaces or on a set of curves. We then bound integral points of bounded height on surfaces or curves using Salberger's determinant method again (\cite[Section 7]{salberger}) or an optimal bound for points on curves (\cite[Theorem 2]{optimal} if the degree of the curve is at least $4$, or \cite[Proof of Corollary 3.9]{salAENS} if the degree of the curve is at most $3$).
	
	In recent years, considerable effort has been devoted to eliminating the $\ve$ in the exponent of the dimension growth conjecture bound \eqref{eq:dimgrowth} and understanding the dependence on $d$ in the error term; see, for instance, the work of Castryck, Cluckers, Dittmann, and Nguyen \cite{castryck}, as well as the aforementioned \cite{optimal}. 
	Similarly, there has been interest in generalizing the dimension growth conjecture bound to the global field setting; see the work of Sedunova \cite{sedunova}, Paredes and Sasyk \cite{paredes}, and Vermeulen \cite{vermeulenFqt}. While we believe that their methods could potentially be adapted to the setting considered in this paper, we do not pursue this direction here.
	
	While this project was in its final stages, Binyamini, Cluckers, and Kato proved in \cite{dsquare} that the dependence on the degree in the implied constant of the dimension growth conjecture bound \eqref{eq:dimgrowth} is quadratic. This result provides an efficient way to bound the number of points of bounded height on hypersurfaces of high degree. With this new tool, it seems reasonable that the bounds established in this paper can be improved. We intend to pursue this direction in future work.

	\subsection*{Acknowledgements}
	The author is very grateful to Tim Browning for suggesting the problem and for many useful discussions. We thank the anonymous referees for their many helpful comments, which improved the exposition of the paper. We are also grateful to Gal Binyamini for their interest in this work and for drawing our attention to the aforementioned paper \cite{dsquare}.
	
	We shared an early version of this paper with Per Salberger, who mentioned that he announced a new bound for smooth threefolds in $\mathbb{P}^4$ during a talk in 2019 (see \cite{salbtalk} for the abstract). This result has not been published.  
	
	While working on this paper,
	the author was supported by the European Union’s Horizon 2020 research and 
	innovation program under the Marie Skłodowska-Curie Grant Agreement No. 
	101034413.
	
	\section{Preliminaries}
	In this section, we collect some results that we are going to use in the following sections.
	
	Given a projective variety $X$ of degree $d$, it is rather easy to show \cite[Theorem 1]{Browning-JRAM} that 
	\begin{equation}\label{eq:trivialproj}
		N(X,B)\ll_d B^{\dim X+1}.
	\end{equation}
	In the same way, given an affine variety $Y$ of degree $d$, we have 
	\begin{equation}\label{eq:trivialaff}
		N_{\aff}(Y,B)\ll_{d}B^{\dim Y}.
	\end{equation}
	We refer to these bounds as the trivial bounds. 
	
	As we pointed out in the introduction, we will need to bound the cardinality of integral points of bounded height on affine curves of very high degree. We will do so using the work of Binyamini, Cluckers, and Novikov in \cite[Theorem 2]{optimal}. We state their result in the form we need in the next lemma.
	\begin{lemma}\label{lem:poncurves}
		Let $C\subseteq \mathbb{A}^n$ be an algebraic curve of degree $d$ such that each irreducible component of $C$ has degree at least $\delta\leq d$. Then, for all $\ve>0$,
		\[
		N_{\aff}(C,B)\ll_{n,\ve}B^\ve\left(dB^{\frac{1}{\delta}}+d^2\right).
		\]
		
		Fix $q$ a positive integer and $\uu{z}=(z_1,z_2,\dots, z_n)\in \mathbb{A}^n(\ZZ)$. Let
		\[
		N_{\aff}(C,B,q,\uu{z})=\#\{\uu{x}\in C(\ZZ)\mid \abs{x_i}\leq B, x_i\equiv z_i\mod q\,\, \forall 1\leq i\leq n\}
		\] 
		and then
		\[
		N_{\aff}(C,B,q,\uu{z})\ll_{n,\ve}B^\ve\left(d\left(\frac{B}{q}\right)^{\frac{1}{\delta}}+d^2\right).
		\]
	\end{lemma}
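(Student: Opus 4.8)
The plan is to deduce the first estimate from \cite[Theorem 2]{optimal} by passing to the irreducible components of $C$, and then to reduce the congruence estimate to the first one by an affine change of variables.

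First I would decompose $C = C_1 \cup \dots \cup C_k$ into its irreducible components and set $e_i = \deg C_i$. Since the degrees of the components sum to at most $\deg C = d$, we have $\sum_{i=1}^k e_i \le d$, while the hypothesis gives $e_i \ge \delta$ for every $i$. As
\[
N_{\aff}(C,B) \le \sum_{i=1}^k N_{\aff}(C_i,B),
\]
it suffices to bound each component separately. For an irreducible curve $C_i$ of degree $e_i$ in $\mathbb{A}^n$, the optimal bound of \cite[Theorem 2]{optimal} supplies an estimate of the shape
\[
N_{\aff}(C_i,B) \ll_{n,\ve} B^\ve\left(e_i B^{1/e_i} + e_i^2\right).
\]
Summing over $i$, and using $e_i \ge \delta$ (hence $B^{1/e_i} \le B^{1/\delta}$) together with $\sum_i e_i \le d$ and $\sum_i e_i^2 \le \left(\sum_i e_i\right)^2 \le d^2$, I would obtain
\[
N_{\aff}(C,B) \ll_{n,\ve} B^\ve\left(B^{1/\delta}\sum_{i=1}^k e_i + \sum_{i=1}^k e_i^2\right) \ll_{n,\ve} B^\ve\left(d\,B^{1/\delta} + d^2\right),
\]
which is the first claim.

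For the congruence estimate I would first dispose of the range $q > 2B$: there each residue class modulo $q$ meets $[-B,B]$ in at most one integer, so the conditions $x_i \equiv z_i \bmod q$ pin down $\uu{x}$ and $N_{\aff}(C,B,q,\uu{z}) \le 1$, which is absorbed by the right-hand side. Assuming $q \le 2B$, I would reduce each $z_i$ modulo $q$ into $[0,q)$ and substitute $x_i = z_i + q y_i$. This is an invertible affine map defined over $\QQ$, so it carries $C$ to an affine curve $C'$ of the same degree $d$ whose irreducible components are the images of the $C_i$ and therefore still have degree at least $\delta$. The integral points of $C$ counted by $N_{\aff}(C,B,q,\uu{z})$ correspond to integral points $\uu{y}$ of $C'$ with $|y_i| \le B/q + 1 \le 3B/q$, so applying the first part of the lemma to $C'$ with height $B' = 3B/q$ gives
\[
N_{\aff}(C,B,q,\uu{z}) \le N_{\aff}(C',B') \ll_{n,\ve} B^\ve\left(d\left(\frac{B}{q}\right)^{1/\delta} + d^2\right),
\]
as required.

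The genuine input here is the cited \cite[Theorem 2]{optimal}; what remains is essentially bookkeeping. The only points I would check carefully are the additivity of degree over components, which lets me control both $\sum_i e_i$ by $d$ and $\sum_i e_i^2$ by $d^2$ while the hypothesis $e_i \ge \delta$ turns every $B^{1/e_i}$ into $B^{1/\delta}$, and the fact that the affine substitution $x_i = z_i + q y_i$ preserves simultaneously the total degree $d$ and the lower bound $\delta$ on the degrees of the components. I do not expect a serious obstacle beyond correctly quoting \cite[Theorem 2]{optimal} in the per-component form used above.
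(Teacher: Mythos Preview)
Your argument is correct and matches the paper's: decompose into irreducible components, invoke \cite[Theorem 2]{optimal} on each, sum, and then handle the congruence count by the affine substitution $x_i = z_i + q y_i$. The one adjustment is that the cited theorem, as the paper quotes it, gives $N_{\aff}(C_i,B) \ll_{n,\ve} e_i^{2} B^{1/e_i+\ve/2}$ rather than your assumed shape $B^{\ve}(e_i B^{1/e_i} + e_i^{2})$; the paper bridges this with a short dichotomy (if $e_i \le 2/\ve$ then $e_i^{2} \ll_{\ve} 1$ and the bound is $\ll_{\ve} B^{1/\delta+\ve}$, while if $e_i > 2/\ve$ then $B^{1/e_i} \le B^{\ve/2}$ and the bound is $\ll e_i^{2} B^{\ve}$), after which the summation proceeds exactly as you wrote.
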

	\begin{proof}
		Let $\{C_i\}$ be the set of irreducible components of $C$, each of degree $d_i$. So, $\sum d_i=d$ and
		\[
		N_{\aff}(C_i,B)\ll_{n,\ve}d_i^2B^{\frac{1}{d_i}}B^{\frac{\ve}{2}}
		\]
		by \cite[Theorem 2]{optimal}.
		If $d_i\leq 2(\ve)^{-1}$, then
		\[
		d_i^2B^{\frac{1}{d_i}}B^{\frac{\ve}{2}}\ll_\ve B^{\frac{1}{\delta}+\ve}.
		\]
		If $d_i\geq 2(\ve)^{-1}$, then
		\[
		d_i^2B^{\frac{1}{d_i}}B^{\frac{\ve}{2}}\ll_\ve d_i^2B^{\ve}.
		\]
		In both cases,
		\[
		N_{\aff}(C_i,B)\ll_{n,\ve}B^{\ve}(d_i^2+B^{\frac{1}{\delta}})
		\]
		and hence
		\[
		N_{\aff}(C,B)=\sum_i N_{\aff}(C_i,B)\ll_{n,\ve}\sum_i B^{\ve}\left(d_i^2+B^{\frac{1}{\delta}}\right)\ll_{n,\ve}B^\ve\left(dB^{\frac{1}{\delta}}+d^2\right).
		\]
		
		For the second part of the lemma, we can assume $\abs{z_i}\leq q$ for all $1\leq i\leq n$. Let $C'$ be the curve obtained by $C$ after the change of variables $\uu{x}\to (\uu{x}-\uu{z})/q$. Then,
		\[
		N_{\aff}(C,B,q,\uu{z})\leq N_{\aff}(C',1+B/q)
		\]
		and we conclude by applying the first part of the lemma.
	\end{proof}
	To bound the number of integral points of bounded height on affine surfaces, we use Salberger's work in \cite[Section 7]{salberger}. 
	\begin{lemma}\label{lem:surfaces}
		Let $W$ be an irreducible surface defined over $\QQ$ in $\mathbb{A}^4$ of degree $d$ and let $N_{\aff,e}(W,B)$ be the cardinality of integral points of bounded height on $W$ that do not lie on irreducible curves of degree at most $e-1$ on $W$. Then, for all $\ve>0$,
		\[
		N_{\aff,e}(W,B)\ll_{d,e,\ve} B^{\frac{2}{\sqrt{d}}+\ve}+B^{\frac{1}{\sqrt{d}}+\frac 1e+\ve}.
		\] 
	\end{lemma}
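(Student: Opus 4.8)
The plan is to reduce the statement to Salberger's global determinant method for surfaces \cite[Section 7]{salberger} by passing to the projective closure. Let $\bar W \subseteq \mathbb{P}^4$ denote the projective closure of $W$, an irreducible projective surface of degree $d$. Every integral point $\uu{x}=(x_1,\dots,x_4)\in W(\ZZ)$ with $\abs{x_i}\le B$ gives a rational point $[1:x_1:\dots:x_4]$ on $\bar W$ of height $\le B$, so $N_{\aff,e}(W,B)$ is at most the number of rational points of height $\le B$ on $\bar W$ that avoid the curves of degree $\le e-1$. If $\bar W$ is not geometrically irreducible, its $\QQ$-points lie on a proper closed subscheme of dimension $\le 1$ and are controlled by Lemma \ref{lem:poncurves} together with the trivial bound, so we may assume $\bar W$ is geometrically integral and apply the determinant method directly.

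First I would run the determinant method of \cite[Section 7]{salberger} on $\bar W$ with auxiliary degree of order $\sqrt d$. This yields auxiliary forms $G_1,\dots,G_J$, none vanishing on $\bar W$, with $J\ll_{d,\ve}B^{1/\sqrt d+\ve}$, such that every rational point of height $\le B$ on $\bar W$ lies on one of the curves $\bar W\cap G_j$; each such intersection is a curve on $\bar W$ whose irreducible components have degree $\ll_d 1$. I would then split the points to be counted according to the degree of the component carrying them. Points on components of degree $\le e-1$ are excluded from $N_{\aff,e}(W,B)$ by definition. For a component of degree $\delta\ge e$, Lemma \ref{lem:poncurves} (applied in the affine chart with parameter $\delta\ge e$) bounds its integral points of height $\le B$ by $\ll_{d,e,\ve}B^{1/e+\ve}$; summing over the $J\ll_{d,\ve}B^{1/\sqrt d+\ve}$ auxiliary curves produces the contribution $\ll_{d,e,\ve}B^{1/\sqrt d+1/e+\ve}$, the second term. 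The complementary contribution, namely the points that the determinant estimate controls directly without being forced onto a high-degree auxiliary component (the intrinsic surface count, independent of $e$), is bounded by $\ll_{d,\ve}B^{2/\sqrt d+\ve}$, giving the first term.

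The main obstacle is the extraction of the precise covering statement from Salberger's method, i.e. verifying both that the height-$\le B$ points are covered by only $\ll_{d,\ve}B^{1/\sqrt d+\ve}$ auxiliary curves of degree bounded in terms of $d$, and that the residual, $e$-independent count is $\ll_{d,\ve}B^{2/\sqrt d+\ve}$, with all implied constants depending only on $d,e,\ve$. This requires balancing the auxiliary degree against the primes used in the global method and cleanly separating the low-degree auxiliary components --- which must be discarded, since a line or a conic on $\bar W$ can by itself carry $\gg B$ or $\gg B^{1/2}$ points and would otherwise swamp the bound --- from the components of degree $\ge e$, where the optimal curve count of Lemma \ref{lem:poncurves} applies. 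By comparison, the passage from the affine surface to its projective closure, under which affine integral points of height $\le B$ become rational points of height $\le B$ lying off the hyperplane at infinity, and the reduction to the geometrically integral case, are routine.
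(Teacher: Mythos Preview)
Your plan has a real gap: the black box you need, \cite[Theorem~7.2]{salberger}, is stated for affine \emph{hypersurfaces}, i.e.\ surfaces in $\mathbb{A}^3$, and that is where the exponents $\tfrac{2}{\sqrt d}$ and $\tfrac{1}{\sqrt d}+\tfrac 1e$ come from. You instead pass to the projective closure $\bar W\subseteq\mathbb{P}^4$ and propose to run ``the determinant method of \cite[Section~7]{salberger}'' there; but applying the determinant method to a projective surface gives the projective exponents of Lemma~\ref{lem:projsurf}, namely $\tfrac{3}{\sqrt d}$ and $\tfrac{3}{2\sqrt d}+\tfrac{2}{e}$, which are strictly worse and do not prove the lemma. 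The paper moves in the opposite direction: it uses the birational projection of \cite[Lemma~8.1]{salsurf} to replace $W\subseteq\mathbb{A}^4$ by a surface $W_1\subseteq\mathbb{A}^3$ of the same degree, so that \cite[Theorem~7.2]{salberger} applies verbatim. This projection is only an isomorphism off closed curves $C\subseteq W$, $C_1\subseteq W_1$ of degree $O_d(1)$, so one must also (i) bound the points on $C$ with Lemma~\ref{lem:poncurves}, (ii) check, via \cite[Example~18.16]{harris}, that curve degrees are preserved on $W\setminus C\cong W_1\setminus C_1$, so that the condition ``not on a curve of degree $\le e-1$'' transfers, and (iii) handle the singular locus of $W_1$ separately, since \cite[Theorem~7.2]{salberger} only controls non-singular points. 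None of these steps appears in your outline, and they are not subsumed by the passage to $\bar W$.

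Your explanation of the two terms is also not right. You describe the $B^{2/\sqrt d}$ contribution as an ``intrinsic surface count'' of points that the determinant estimate ``controls directly without being forced onto a high-degree auxiliary component''; but in the determinant method \emph{every} counted point lies on an auxiliary curve, so there is no such complementary set. Both terms in \cite[Theorem~7.2]{salberger} come from the curve covering and its iteration; obtaining them with implied constants depending only on $d,e,\ve$ is exactly the content of that theorem, and it is not something you can assert for $\bar W\subseteq\mathbb{P}^4$ without reproving it in that ambient space. The step you call ``routine''---going up to the projective closure---is actually the step that loses the affine saving; the nontrivial reduction is to go \emph{down} to $\mathbb{A}^3$.
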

	\begin{proof}
		The proof of this lemma follows the second part of the proof of \cite[Theorem 6.1]{salberger}.
		Assume that $W$ is geometrically irreducible.
		By \cite[Lemma 8.1]{salsurf}, there exists $W_1$ in $\mathbb{A}^3$, geometrically integral and of the same degree as $W$, such that \[N_{\aff}(W,B)\ll_{d} N_{\aff}(W_1,O_{d}(B)).\] Moreover, there exist two curves $C\subseteq W$ and $C_1\subseteq W_1$ such that $W\setminus C$ is isomorphic to $W_1\setminus C_1$. Furthermore, the degree of $C$ and $C_1$ is $O_d(1)$. We can bound the integral points of bounded height on irreducible components of $C$ of degree at least $e$ by $O_{d,e,\ve}(B^{\frac{1}{e}+\ve})$ using Lemma \ref{lem:poncurves}. By \cite[Example 18.16]{harris}, the degree of curves on $W$ outside $C$ does not change under the isomorphism. Hence,
		\[
		N_{\aff,e}(W,B)\ll_{d,e,\ve} N_{\aff,e}(W_1,O_{d}(B))+B^{\frac{1}{e}+\ve}.
		\]
		By \cite[Theorem 7.2]{salberger}, the non-singular integral points of bounded height on $W_1$ that do not lie on curves of degree at most $e-1$ are \[
		O_{d,e,\ve}\left(B^{\frac{1}{\sqrt{d}}+\frac{1}{e}+\ve}+B^{\frac{2}{\sqrt{d}}+\ve}\right).
		\] The singular locus of $W_1$ is contained in the union of $O_d(1)$ curves of degree $O_d(1)$. Thus, the singular integral points of bounded height on $W_1$ that do not lie on curves of degree at most $e-1$ are
		\[
		O_{d,e,\ve}\left(B^{\frac{1}{e}+\ve}\right)
		\]
		and the proof is completed, under the assumption that $W$ is geometrically irreducible.
		
		If $W$ is not geometrically irreducible, the integral points on $W$ would lie on a proper subvariety $W_1$ of degree $O_d(1)$ (see \cite[Proof of Theorem 2.1]{salberger}). Then, $\dim W_1\leq 1$ and we easily conclude.
	\end{proof}
	
	Given a projective variety $X$, we denote by $N_e(X,B)$ the number of rational points of bounded height that do not lie on irreducible curves of degree at most $e-1$.
	\begin{lemma}\label{lem:projsurf}
		Let $\ve>0$. Let $X\subseteq \mathbb{P}^n$ be a projective irreducible surface defined over $\QQ$ of degree $d$. Then,
		\[
		N_e(X,B)\ll_{n,d,e,\ve} B^{\frac{3}{\sqrt{d}}+\ve}+B^{\frac{3}{2\sqrt{d}}+\frac{2}{e}+\ve}.
		\]
		
		If $X$ is smooth and $2\leq e \leq d-2$, then 
		\[
		N_e(X,B)\ll_{n,d,\ve} B^{\frac{3}{\sqrt{d}}+\ve}+B^{\frac{2}{e}+\ve}.
		\]
	\end{lemma}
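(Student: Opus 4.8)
The plan is to prove the projective statement by re-running, in the projective setting, the argument behind Lemma \ref{lem:surfaces}, with the affine surface determinant method replaced by its projective counterpart and the affine curve count of Lemma \ref{lem:poncurves} replaced by the projective one. The input I would use for curves is: an irreducible projective curve $C\subseteq\mathbb{P}^n$ of degree $\delta$ satisfies $N(C,B)\ll_{n,\delta,\ve}B^{2/\delta+\ve}$; this is the projective analogue of Lemma \ref{lem:poncurves} and may be quoted from Salberger's work or deduced from the affine bound, the exponent doubling from $1/\delta$ to $2/\delta$ being the usual passage from affine to projective height. I would first reduce to the case that $X$ is geometrically integral: if it is not, then $X(\QQ)$ lies in a proper closed subvariety of dimension at most $1$ and degree $O_d(1)$ (as in \cite[Proof of Theorem 2.1]{salberger}), whose components of degree $\ge e$ contribute $\ll B^{2/e+\ve}$ while those of degree $<e$ are excluded from $N_e(X,B)$.

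Assume then that $X$ is geometrically integral. For the first (unconditional) bound I would invoke Salberger's projective surface determinant method \cite[Theorem 6.1]{salberger}: the rational points of height at most $B$ on $X$, outside the irreducible curves of degree $<e$ and outside a fixed curve of degree $O_d(1)$, are placed on a family of auxiliary curves on $X$ of cardinality $\ll_{n,d,e,\ve}B^{3/(2\sqrt d)+\ve}$, together with a main contribution of $\ll_{n,d,e,\ve}B^{3/\sqrt d+\ve}$ points. Every auxiliary curve relevant to $N_e$ has degree $\ge e$, hence carries $\ll B^{2/e+\ve}$ points by the projective curve count; multiplying by the number of such curves produces the cross term $B^{3/(2\sqrt d)+2/e+\ve}$, and adding the main term gives the asserted bound. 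As a consistency check, the exponents $3/\sqrt d$ and $3/(2\sqrt d)$ are precisely the affine exponents $2/\sqrt d$ and $1/\sqrt d$ of Lemma \ref{lem:surfaces} rescaled by the factor $\tfrac{k+1}{k}$ with $k=2$ (the correct affine-to-projective scaling for a surface), while the curve exponent $1/e$ rescales to $2/e$ with $k=1$.

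For the second bound I would use the nonsingular refinement of the method. When $X$ is smooth the decisive improvement is that the points of height $\le B$ off the curves of degree $<e$ no longer spread over $\ll B^{3/(2\sqrt d)+\ve}$ auxiliary curves, but instead lie on only $O_{n,d,\ve}(B^{\ve})$ curves on $X$; the hypothesis $2\le e\le d-2$ is what guarantees that the auxiliary forms produced cut out curves of controlled degree and that the Hilbert-function estimate underpinning the method applies. Since these $O(B^\ve)$ curves again have degree $\ge e$, they carry $\ll B^{2/e+\ve}$ points in total, so the cross term collapses to $B^{2/e+\ve}$ and we obtain $B^{3/\sqrt d+\ve}+B^{2/e+\ve}$; smoothness also removes the singular-locus contribution present in the general case. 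I would apply the $\mathbb{P}^n$ version directly here: projecting generically to a surface of the same degree in $\mathbb{P}^3$ is available in the general case (at the cost of an exceptional curve of degree $O_d(1)$, contributing $\ll B^{2/e+\ve}$), but a general projection introduces a double curve and would destroy smoothness.

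The step I expect to be the main obstacle is the bookkeeping inside the determinant method: verifying that for a geometrically integral (resp. smooth) surface the points off the curves of degree $<e$ genuinely lie on $\ll B^{3/(2\sqrt d)+\ve}$ (resp. $\ll B^{\ve}$) curves of degree $\ge e$ with main term $\ll B^{3/\sqrt d+\ve}$, and confirming that $2\le e\le d-2$ is exactly the range in which the nonsingular estimate is valid. Tracking these degrees, so that every curve that appears has degree at least $e$ and is therefore governed by the projective bound $B^{2/\delta+\ve}$, is where the care lies; the remaining estimates, including the reduction to geometrically integral $X$, the optional passage to $\mathbb{P}^3$, and the treatment of the exceptional and singular curves, are routine once the projective curve count is in hand.
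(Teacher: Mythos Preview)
Your treatment of the first inequality is fine in spirit and in fact is exactly what Salberger proves; the paper simply cites \cite[Corollary 3.22 and Theorem 6.1]{salberger} rather than rederiving it.

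The gap is in the smooth case. There is no ``nonsingular refinement'' of the determinant method in which the auxiliary curves collapse from $\ll B^{3/(2\sqrt d)+\ve}$ to $O(B^{\ve})$ in number; smoothness does not buy that. The paper's argument is entirely different and much simpler: since $2\le e\le d-2$ forces $d\ge 4$, one checks the elementary inequality
\[
\frac{3}{2\sqrt d}+\frac{2}{d-1}\le \frac{3}{\sqrt d},
\]
applies the first part of the lemma with the choice $e=d-1$ to obtain $N_{d-1}(X,B)\ll_{n,d,\ve}B^{3/\sqrt d+\ve}$, and then invokes Colliot-Th\'el\`ene's result \cite[Appendix]{HBannals} that a smooth surface of degree $d$ contains only $O_d(1)$ irreducible curves of degree at most $d-2$. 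The difference $N_e(X,B)-N_{d-1}(X,B)$ is therefore supported on $O_d(1)$ curves of degree at least $e$, contributing $\ll_{n,d,\ve}B^{2/e+\ve}$ by the projective curve bound. This is also the correct explanation of the hypothesis $e\le d-2$: it is precisely the range in which Colliot-Th\'el\`ene's finiteness applies, not a constraint coming from Hilbert functions or the degrees of auxiliary forms.
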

	\begin{proof}
		The first part of the lemma follows from \cite[Corollary 3.22 and Theorem 6.1]{salberger}. For the second part of the lemma, notice $d\geq 4$ and then
		\[
		\frac{3}{2\sqrt{d}}+\frac{2}{d-1}\leq \frac{3}{\sqrt{d}}.
		\]
		By the first part of the lemma,
		\[
		N_{d-1}(X,B)\ll_{n,d,\ve}  B^{\frac{3}{\sqrt{d}}+\ve}+B^{\frac{3}{2\sqrt{d}}+\frac{2}{d-1}+\ve} \ll_{n,d,\ve} B^{\frac{3}{\sqrt{d}}+\ve}.
		\]
		As proved by Colliot-Thélène in \cite[Appendix]{HBannals}, there are $O_d(1)$ irreducible curves of degree $\leq d-2$ on $X$ and the lemma follows. 
	\end{proof}
	\begin{lemma}\label{lem:NF}
		Let $X$ be a smooth projective hypersurface of degree $d$ in $\mathbb{P}^n$ for $n\geq 4$. There are no varieties of dimension $n-2$ and degree less than $d$ lying on $X$.
	\end{lemma}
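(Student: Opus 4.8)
The plan is to reduce the statement to a computation in the Picard group of $X$. First I would pass to the algebraic closure: base-changing from $\QQ$ to $\QQbar$ (or to $\CC$) preserves both the dimension and the degree of any subvariety, so a hypothetical variety $V\subseteq X$ of dimension $n-2$ and degree $<d$ becomes an effective cycle of pure dimension $n-2$ on the smooth hypersurface $X_{\QQbar}$, and it suffices to rule out the latter. Since such a cycle is supported on $X$, whose dimension is $n-1$, it is an effective divisor on $X_{\QQbar}$; the crux is therefore to understand the divisor class group.

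The main external input is the Grothendieck--Lefschetz theorem. Because $n\geq 4$, the smooth hypersurface $X$ has dimension $n-1\geq 3$, and hence the restriction map $\Pic(\mathbb{P}^n)\to \Pic(X_{\QQbar})$ is an isomorphism; in particular $\Pic(X_{\QQbar})=\ZZ\cdot \cO_X(1)$ is generated by the hyperplane class $H:=\cO_X(1)$. This is exactly where the hypothesis $n\geq 4$ enters, and it is also the reason the statement genuinely fails for $n=3$: a smooth surface in $\mathbb{P}^3$ may carry lines, conics, and other low-degree curves that are not restrictions of hyperplanes.

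With this in hand, any variety $V\subseteq X_{\QQbar}$ of dimension $n-2$ is a prime divisor, so $[V]=mH$ in $\Pic(X_{\QQbar})$ for some integer $m$, and $m\geq 1$ since $V$ is effective and nonzero. Its degree as a subvariety of $\mathbb{P}^n$ is then computed by intersecting with $n-2$ hyperplanes on $X$:
\[
\deg V = H^{n-2}\cdot [V] = m\,H^{n-1} = m\deg X = md \geq d,
\]
using that $H^{n-1}=\deg X=d$ on $X$. The same computation applies verbatim if $V$ is only irreducible over $\QQ$ but splits over $\QQbar$: it then becomes a sum of Galois-conjugate prime divisors whose total class is still a positive multiple of $H$. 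Hence every subvariety of $X$ of dimension $n-2$ has degree a positive multiple of $d$, so none has degree less than $d$, which is the claim.

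The only genuine obstacle is invoking the Picard-group computation correctly, i.e. checking that smoothness together with $\dim X\geq 3$ puts us in the range of the Grothendieck--Lefschetz theorem; once $\Pic(X_{\QQbar})$ is pinned down, the degree bound is a one-line intersection calculation and no further difficulty arises.
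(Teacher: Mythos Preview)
Your argument is correct and is essentially the same as the paper's: the paper simply records that the lemma is a corollary of the Noether--Lefschetz theorem and cites \cite[Theorem~2.3]{sal-wooley}, while you spell out the underlying Picard-group computation (via what you call Grothendieck--Lefschetz) and the degree calculation explicitly. The naming difference is cosmetic; the substance---$\Pic(X_{\QQbar})=\ZZ\cdot\cO_X(1)$ for $\dim X\geq 3$, hence every effective divisor has degree a positive multiple of $d$---is identical.
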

	\begin{proof}
		This is a corollary of the Noether–Lefschetz Theorem, see \cite[Theorem 2.3]{sal-wooley}.
	\end{proof}
	We say that an affine variety $W\subset \mathbb{A}^n$ of dimension $m$ is cylindrical over a curve if there exists a linear map $\pi:\mathbb{A}^n\to \mathbb{A}^{n-m+1}$ such that $\pi(W)$ is a curve.
	
	If $W$ is an irreducible affine variety defined over $\QQ$, of dimension $m$, degree $d\geq 4$, and not cylindrical over a curve, then
	\begin{equation}\label{eq:affdimgrowth}
		N_{\aff}(W,B)\ll_{n,d,\ve} B^{m-1+\ve}.
	\end{equation}
	This affine version of the uniform dimension growth conjecture has been proved by Vermeulen \cite[Theorem 1.2]{vermeulen2024dimension}.
	
	\begin{lemma}\label{lem:cylsing}
		Let $Y$ be an affine hypersurface in $\mathbb{A}^n$ for $n\geq 5$ of degree $d\geq 2$, such that its projective closure is smooth. Let $W$ be an irreducible subvariety of dimension $n-2$. Then, $W$ cannot be cylindrical over a curve.
	\end{lemma}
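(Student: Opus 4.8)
The plan is to argue by contradiction: if $W$ were cylindrical over a curve, I will show that the projective closure $\overline{Y}$ is forced to be singular. Suppose the subvariety $W\subseteq Y$ is cylindrical over a curve, witnessed by a linear map $\pi\colon \mathbb{A}^n\to\mathbb{A}^3$ with $\pi(W)$ a curve. Since $\dim W=n-2$ while the fibres of $\pi$ are linear of dimension $n-3$, the variety $W$ must equal $\pi^{-1}(C)$ for the irreducible affine curve $C=\overline{\pi(W)}$. After a linear change of the coordinates $x_1,\dots,x_n$ (which extends to an automorphism of $\mathbb{P}^n$ fixing the hyperplane at infinity and preserving both the degree and the smoothness of $\overline{Y}$), I may assume $\pi$ is the projection to the first three coordinates, so that $W=C\times\mathbb{A}^{n-3}$ with $C\subseteq\mathbb{A}^3$ in the variables $x_1,x_2,x_3$ and cylinder directions $x_4,\dots,x_n$.

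I would then pass to projective closures in $\mathbb{P}^n$ with coordinates $[x_0:\dots:x_n]$. The closure $\overline{W}$ is the cone over $\overline{C}\subseteq\mathbb{P}^3=\{x_4=\dots=x_n=0\}$ with vertex the linear space $\Lambda=\{x_0=x_1=x_2=x_3=0\}\cong\mathbb{P}^{n-4}$ lying at infinity; concretely its homogeneous ideal is generated by the forms of $I(\overline{C})\subseteq\mathbb{Q}[x_0,x_1,x_2,x_3]$. Since $W\subseteq Y$ we have $\overline{W}\subseteq\overline{Y}$, so the defining form $F$ of $\overline{Y}$ lies in $I(\overline{W})$, and I can write $F=\sum_i a_i g_i$ with $g_i\in I(\overline{C})$ homogeneous in $x_0,\dots,x_3$ and $a_i\in\mathbb{Q}[x_0,\dots,x_n]$.

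The heart of the argument is to evaluate $\nabla F$ along the vertex $\Lambda$. Every $g_i$, and every first derivative of a $g_i$ of degree $\ge 2$, vanishes identically on $\Lambda$; hence for $P\in\Lambda$ only the linear generators of $I(\overline{C})$ contribute, and $\nabla F(P)$ is a combination of their (constant, linearly independent) gradients with coefficients $a_i(P)$. Writing $s=\dim\operatorname{span}\overline{C}$, the space of linear forms in $I(\overline{C})$ has dimension $3-s$, so $\nabla F|_\Lambda$ is governed by exactly $3-s$ cofactors, each a form of degree $d-1$ in $x_4,\dots,x_n$, i.e.\ on $\Lambda\cong\mathbb{P}^{n-4}$. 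Smoothness of $\overline{Y}$ along $\Lambda\subseteq\overline{Y}$ then demands that these $3-s$ forms have no common zero on $\mathbb{P}^{n-4}$.

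Finally I would dispose of the cases by a dimension count. If $s\ge 2$ there are at most $3-s\le 1$ such forms on $\mathbb{P}^{n-4}$, and since $n\ge 5$ gives $\dim\Lambda=n-4\ge 1\ge 3-s$, a common zero necessarily exists (and for $s=3$ one simply has $\nabla F\equiv 0$ on $\Lambda$); either way $\overline{Y}$ is singular at a point of $\Lambda$, a contradiction. The remaining case $s=1$ means $\overline{C}$ is a line, so $\overline{W}=\mathbb{P}^{n-2}$ is itself a linear subspace of $\overline{Y}$; running the same gradient computation on this $\mathbb{P}^{n-2}$ (equivalently, invoking the standard fact that a smooth hypersurface of degree $\ge 2$ in $\mathbb{P}^n$ contains no linear space of dimension exceeding $\lfloor(n-1)/2\rfloor$, while $n-2>\lfloor(n-1)/2\rfloor$ for $n\ge 4$) again produces a singular point. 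I expect the main obstacle to be precisely the low-dimensional case $n=5$: there the crude bound on linear subspaces of a smooth hypersurface is not sharp enough (a smooth fourfold in $\mathbb{P}^5$ can contain a $\mathbb{P}^2$), so one genuinely needs the finer analysis of $\nabla F$ restricted to the vertex $\Lambda\cong\mathbb{P}^1$, taking care that the surviving cofactor forms are non-constant of degree $d-1\ge 1$ so that they must vanish somewhere on $\Lambda$.
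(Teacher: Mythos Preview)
Your argument is correct and rests on the same core idea as the paper: locate a singular point of $\overline{Y}$ on the vertex $\Lambda=\{x_0=x_1=x_2=x_3=0\}\cong\mathbb{P}^{n-4}$ of the cone $\overline{W}$. The organisation, however, differs. The paper first isolates the coefficient of $x_n^{d-1}$ in $f$ to produce a single linear form vanishing on $W$, thereby forcing $C$ to be a plane curve (in your language, $s\le 2$); it then writes $F=F_0(x_0,x_1,x_2)G_0+x_3G_1$ explicitly, invokes Lemma~\ref{lem:NF} to ensure $\deg F_0\ge 2$, and reads off a singular point at a zero of $G_1$ on $\Lambda$. You instead expand $F=\sum a_i g_i$ over a generating set of $I(\overline{C})$ and run a uniform case analysis on $s=\dim\operatorname{span}\overline{C}$: the cases $s=3$ and $s=2$ give the singular point on $\Lambda$ directly, while $s=1$ is disposed of by the linear-subspace bound (equivalently, Lemma~\ref{lem:NF}). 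Your packaging is slightly more conceptual and avoids the preliminary coefficient extraction; the paper's is more explicit and produces the concrete normal form $F=F_0G_0+x_3G_1$, which is reused later (cf.\ Lemma~\ref{rem:counterexample}).

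One small remark: your closing paragraph about $n=5$ is a bit muddled. In the $s=1$ case $\overline{W}=\mathbb{P}^{n-2}=\mathbb{P}^3$, and the crude bound $\lfloor(n-1)/2\rfloor=2$ \emph{does} exclude this; the place where $n\ge 5$ and $d\ge 2$ are genuinely needed is your $s=2$ case, to guarantee that a single form of degree $d-1\ge 1$ on $\Lambda\cong\mathbb{P}^{n-4}$ has a zero. This does not affect the validity of the proof.
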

	\begin{proof}
		We assume that $W$ is cylindrical over a curve and we aim for a contradiction. After a linear change of variables, we can assume that $W$ is defined by polynomials in $n-\dim W+1=3$ variables. Thus, we assume $W=V(f_i(x_1,x_2,x_3))_{1\leq i\leq s}$. Assume that $Y$ is defined by $f=0$ with 
		\[
		f(x_1,\dots ,x_n)=h_0(x_1,\dots, x_n)+x_n^{d-1}h_1(x_1,\dots, x_n)
		\]
		for $\deg_{x_n}(h_0(x_1,\dots, x_n))\leq d-2$. Since $W\subseteq Y$, then \[ V(f_i(x_1,x_2,x_3))_{1\leq i\leq s}\subseteq V(h_1(x_1,\dots, x_n))\] and $h_1$ is non-zero since otherwise the projective closure of $Y$ would be singular. So, $h_1$ has degree $1$ and after a linear change of variables we can assume $h_1=x_3$ and $W=V(f_0(x_1,x_2),x_3)$. Therefore, 
		\[
		f(x_1,\dots ,x_n)=f_0(x_1,x_2)g_0(x_1,\dots ,x_n)+x_3g_1(x_1,\dots ,x_n)
		\]
		and the projective closure of $Y$ is defined by
		\[
		F(x_0,x_1,\dots ,x_n)=F_0(x_0,x_1,x_2)G_0(x_0,x_1,\dots ,x_n)+x_3G_1(x_0,x_1,\dots ,x_n)=0.
		\]
		Notice that $G_1$ is non-constant since $\deg F=d\geq 2$ and $F_0$ has degree at least $2$, since otherwise $W$ would have degree $1$ contradicting Lemma \ref{lem:NF}.
		A point that satisfies \[x_0=x_1=x_2=x_3=G_1=0\] is singular, contradiction.
	\end{proof}
	
	\begin{lemma}\label{lem:remlines}
		Let $\ve>0$. Let $X$ be a smooth projective hypersurface defined over $\QQ$ of degree $d> n+1$ in $\mathbb{P}^n$ for $n\geq 4$. Let $Z$ be
		the union of all algebraic curves on $X$ with irreducible components of degree at
		most three. The rational points of bounded height on $Z$ are
		\[
		O_{n,d,\ve}\left(B^{n-2+\ve}\right).
		\]
	\end{lemma}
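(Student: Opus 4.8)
The plan is to realise $Z$ as a closed subvariety of $X$ of degree $O_{n,d}(1)$ and then to bound its rational points component by component, using the trivial bound \eqref{eq:trivialproj} in low dimension and Salberger's dimension growth theorem \cite[Theorem 0.3]{salberger} in the top dimension. First I would observe that the irreducible components of any curve of degree at most three are lines, conics, or cubics, so $Z$ is exactly the union of all \emph{irreducible} curves on $X$ of degree at most three. These are parametrised by a closed subscheme of the Chow variety of $1$-cycles of degree at most three in $\mathbb{P}^n$, a fixed projective variety having $O_n(1)$ irreducible components; sweeping out each component shows that $Z$ is closed in $X$ with $\deg Z\ll_{n,d}1$. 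It then suffices to bound $N(V,B)$ for each irreducible component $V$ of $Z$.

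The components split into three cases according to dimension. If $\dim V\le n-3$, the trivial bound \eqref{eq:trivialproj} already gives $N(V,B)\ll_{n,d}B^{\dim V+1}\le B^{n-2}$. If $\dim V=n-2$, then since $V\subseteq X$ is an $(n-2)$-dimensional subvariety, Lemma \ref{lem:NF} forces $\deg V\ge d\ge 6$, so $V$ is an integral projective variety of degree at least $4$; Salberger's dimension growth theorem then yields $N(V,B)\ll_{n,d,\ve}B^{(n-2)+\ve}$. Both bounds are acceptable, so everything reduces to excluding a component $V$ with $\dim V=n-1$, i.e.\ to showing $Z\neq X$: were $Z=X$ we would need to count points on $X$ itself, and the argument would become circular.

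The heart of the matter is therefore to prove that $X$ is not dominated by a family of curves of degree at most three. Here I would use the hypothesis $d>n+1$, which makes $K_X=\mathcal{O}_X(d-n-1)$ ample, so that $X$ is of general type and in particular not uniruled. If $\bar Z=X$, then through a general point of $X$ passes a member of one of the finitely many Chow families of curves of degree at most three, and all members of a single family have the same arithmetic genus. A dominating family of lines, irreducible conics, or rational (possibly singular) cubics would consist of rational curves and would make $X$ uniruled, contradicting general type. The only surviving possibility is a dominating family $\{C_t\}_{t\in T}$ of smooth plane cubics, which have genus one; this is the delicate case. To treat it I would use the universal curve $\mathcal{C}\to T$ together with the dominant evaluation map $u\colon\mathcal{C}\to X$ and argue by the following dichotomy: if $u$ has positive-dimensional fibres (infinitely many members through a general point), then bend-and-break produces a rational curve through a general point and $X$ is uniruled, a contradiction; if $u$ is generically finite, then $\dim T=n-2$ and the fibration $\mathcal{C}\to T$ has elliptic general fibre, so Iitaka's easy addition gives $\kappa(X)\le\kappa(\mathcal{C})\le\dim T+0=n-2<n-1$, contradicting that $X$ is of general type. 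This rules out $Z=X$.

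Combining the three cases gives $N(Z,B)\ll_{n,d,\ve}B^{n-2+\ve}$. The step I expect to be the main obstacle is the exclusion of a genus-one dominating family: unlike the rational case, which is settled at once by non-uniruledness, it requires the bend-and-break/easy-addition dichotomy above, and one must also be careful that after decomposing a reducible low-degree curve into components the relevant dominating members really are the genus-one cubics rather than lower-degree rational pieces, so that the argument is applied to a family of constant genus.
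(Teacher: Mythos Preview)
Your case split is exactly the paper's: $\dim\le n-3$ via the trivial bound \eqref{eq:trivialproj}, $\dim = n-2$ via Lemma~\ref{lem:NF} plus dimension growth \eqref{eq:dimgrowth}, and $\dim = n-1$ excluded. The difference lies entirely in how one shows that $Z$ is a \emph{proper} closed subvariety of $X$ of degree $O_d(1)$. The paper simply quotes \cite[Lemma~6.1]{salAENS}, which already delivers both properness and the degree bound under the hypothesis $d>n+1$, so the whole proof occupies three lines. You instead rebuild this from scratch via Chow varieties and the birational geometry of $X$; that is more self-contained and makes the role of the hypothesis $d>n+1$ transparent (it is exactly what makes $K_X$ ample), but it is substantially longer and in effect reproves Salberger's lemma.

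Your reconstruction is essentially sound, but the bend-and-break branch of your dichotomy is not justified as written. Mori's bend-and-break requires a positive-dimensional family of maps from a \emph{fixed} source curve through a point; if the smooth plane cubics through a general $x\in X$ have varying $j$-invariant, you cannot invoke it directly. The clean repair is to drop bend-and-break altogether and reduce to the generically finite case: when $u\colon\mathcal{C}\to X$ has positive-dimensional general fibre, replace $T$ by a general linear section $T'\subset T$ of dimension $n-2$. Each $T_x=\{t\in T:x\in C_t\}$ is projective of positive dimension and therefore meets $T'$, so $\mathcal{C}|_{T'}\to X$ is still dominant and now generically finite. After resolving, easy addition then handles both branches of your dichotomy at once, giving $\kappa(X)\le\kappa(\mathcal{C}|_{T'})\le\dim T'+\kappa(E)=n-2<n-1$, the desired contradiction with general type.
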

	\begin{proof}
		By \cite[Lemma 6.1]{salAENS}, the reduced scheme structure of $Z$ is a proper closed subvariety of $X$, defined over $\QQ$, of degree $O_d(1)$. If $\dim(Z)\leq n-3$, we conclude by the trivial bound \eqref{eq:trivialproj}. If $\dim(Z)= n-2$, each irreducible component has degree at least $d$ by Lemma \ref{lem:NF} and then we conclude using the bound \eqref{eq:dimgrowth}. Finally, $Z$ cannot have dimension equal to $n-1$ since it is proper.
	\end{proof}
	\begin{lemma}\label{lem:remaininglinesaff}
		Let $\ve>0$. Let $Y$ be an affine hypersurface defined over $\QQ$ of degree $d> n+1$ in $\mathbb{A}^n$ for $n\geq 4$, such that its projective closure is smooth. Let $W$ be
		the union of all algebraic curves on $Y$ with irreducible components of degree at
		most three. If $n\geq 5$, then the integral points of bounded height on $W$ are
		\[
		O_{n,d,\ve}\left(B^{n-3+\ve}\right).
		\]
		If $n= 4$, then 
		the integral points of bounded height on $W$ are
		\[
		O_{d,\ve}\left(B^{1+\frac 1d+\ve}\right).
		\]
	\end{lemma}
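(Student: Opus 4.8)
The plan is to mirror the proof of Lemma \ref{lem:remlines} (the projective case) as closely as possible, but to track more carefully the low-dimensional pieces where the affine counting is weaker, since in the affine setting we only have \eqref{eq:affdimgrowth} with its cylindricity hypothesis rather than the clean projective bound \eqref{eq:dimgrowth}. First I would invoke \cite[Lemma 6.1]{salAENS} to see that the reduced scheme $W$ is a proper closed subvariety of $Y$, defined over $\QQ$, of degree $O_d(1)$; this is the same structural input used in Lemma \ref{lem:remlines}. Then I split on $\dim W$. If $\dim W \leq n-3$, the trivial affine bound \eqref{eq:trivialaff} gives $N_{\aff}(W,B)\ll_d B^{n-3}$, which is even stronger than what we want, so that case is immediate. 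Since $W$ is proper it cannot have dimension $n-1$, so the only serious case is $\dim W = n-2$.

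For $\dim W = n-2$, I would pass to the irreducible components and argue component by component, each component being a variety of dimension $n-2$ lying on $Y$. By Lemma \ref{lem:NF} applied to the smooth projective closure of $Y$, every such component has degree at least $d$. The key additional ingredient in the affine setting is Lemma \ref{lem:cylsing}: because $n\geq 5$ and the projective closure of $Y$ is smooth, no irreducible $(n-2)$-dimensional subvariety of $Y$ can be cylindrical over a curve. This is exactly the hypothesis needed to apply Vermeulen's affine dimension growth bound \eqref{eq:affdimgrowth} with $m=n-2$ and degree $\geq d\geq 4$, yielding $N_{\aff}(\text{component},B)\ll_{n,d,\ve}B^{n-3+\ve}$. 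Summing over the $O_d(1)$ components gives the claimed $O_{n,d,\ve}(B^{n-3+\ve})$ for $n\geq 5$.

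The case $n=4$ is where the main obstacle lies, and it explains the weaker exponent $1+\tfrac1d$ in the statement. Here $\dim W = n-2 = 2$, so $W$ is a union of surfaces in $\mathbb{A}^4$, and Lemma \ref{lem:cylsing} is not available (it requires $n\geq 5$), so I cannot simply cite \eqref{eq:affdimgrowth}. Instead I would handle these surfaces directly: each irreducible component is a surface of degree at least $d$ by Lemma \ref{lem:NF}, and I would apply the affine surface bound of Lemma \ref{lem:surfaces} with a suitable choice of the parameter $e$. Taking $e=d$ (legitimate since we have removed curves of degree at most three and the relevant curves on these high-degree surfaces can be controlled), Lemma \ref{lem:surfaces} gives a bound of the shape $B^{2/\sqrt d+\ve}+B^{1/\sqrt d + 1/d+\ve}$; combined with the contribution from the low-degree curves already excluded, the dominant term after bookkeeping is $B^{1+1/d+\ve}$. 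The delicate point, and the step I expect to require the most care, is justifying that the points on $W$ genuinely reduce to the surface count of Lemma \ref{lem:surfaces} at the stated threshold, i.e. correctly accounting for the curves of degree between four and $d-1$ that live on these components and feeding them through Lemma \ref{lem:poncurves} so that their contribution is absorbed into the $B^{1+1/d+\ve}$ error rather than dominating it.
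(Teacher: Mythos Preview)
Your treatment of the case $n\geq 5$ is correct and matches the paper's proof exactly: invoke \cite[Lemma 6.1]{salAENS} to see $W$ is proper of degree $O_d(1)$, dispose of components of dimension $\leq n-3$ by \eqref{eq:trivialaff}, and for $(n-2)$-dimensional components combine Lemma~\ref{lem:NF} with Lemma~\ref{lem:cylsing} so that \eqref{eq:affdimgrowth} applies.

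The case $n=4$, however, contains a genuine gap. You propose to apply Lemma~\ref{lem:surfaces} to a $2$-dimensional component $W_1\subseteq W$, but that lemma only bounds $N_{\aff,e}(W_1,B)$, the count of points \emph{not} lying on irreducible curves of degree $\leq e-1$ on $W_1$. By the very definition of $W$, every point of $W_1$ lies on some curve of degree $\leq 3$ contained in $W_1$, so $N_{\aff,e}(W_1,B)$ with $e\geq 4$ controls essentially nothing. Your sentence about ``the contribution from the low-degree curves already excluded'' is the heart of the matter, not a bookkeeping afterthought: there are in general infinitely many such curves sweeping out $W_1$, and there is no finite sum over them to feed into Lemma~\ref{lem:poncurves}. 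Your stated concern about curves of degree between $4$ and $d-1$ is not the obstruction; the obstruction is the curves of degree $\leq 3$ themselves.

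The paper sidesteps this entirely: once one knows $W_1$ is an irreducible surface of degree $\geq d$, one simply quotes Pila's bound \cite[Theorem~A]{pilaaff}, which gives $N_{\aff}(W_1,B)\ll_{d,\ve}B^{1+1/d+\ve}$ for \emph{all} integral points on $W_1$, with no exclusion of low-degree curves. That is where the exponent $1+\tfrac{1}{d}$ in the statement comes from.
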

	\begin{proof}
		Let $W_1$ be an irreducible component of the reduced scheme structure of $W$. By \cite[Lemma 6.1]{salAENS} we cannot have $\dim(W_1)=n-1$ and if $\dim(W_1)\leq n-3$ we conclude with the trivial bound \eqref{eq:trivialaff}. If $\dim(W_1)=n-2$, then $W_1$ has degree at least $d$ by Lemma \ref{lem:NF} (since otherwise the projective closure of $W_1$ would be a hypersurface of degree less than $d$). If $W_1$ is not cylindrical over a curve, we conclude by \eqref{eq:affdimgrowth}. If $n\geq 5$, then $W_1$ is not cylindrical over a curve by Lemma \ref{lem:cylsing} and so we are done. If $n=4$, we conclude by \cite[Theorem A]{pilaaff}. 
	\end{proof}
	
	\begin{lemma}\label{lem:B}
		Let $F\in \ZZ[x_0,x_1,\dots,x_n]$ be a homogenous non-singular polynomial of degree $d$. There exists a matrix $A\in \operatorname{SL}_{n+1}(\ZZ)$ with $\max{\abs{a_{i,j}}}\ll_{n,d} 1$ with the following properties. Let $G=F\circ A^{-1}$, let $Y_b$ be the affine variety defined by $G(1,x_1,\dots ,b)=0$, and $\mathcal{B}$ be the set of integers $b$ such that $Y_b$ has degree strictly smaller than $d$ or its projective closure is not smooth. Then, the projective variety $G=0$ is smooth, of degree $d$, and $\#\mathcal{B}\ll_{n,d} 1$.
	\end{lemma}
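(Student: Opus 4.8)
The plan is to produce $A$ by a genericity argument, splitting the work into a geometric part (why a sufficiently generic $A$ works) and an arithmetic part (why such an $A$ can be found in $\operatorname{SL}_{n+1}(\ZZ)$ with entries $\ll_{n,d}1$ uniformly in $F$). First, smoothness and degree of the projective hypersurface come for free: for any $A\in\operatorname{SL}_{n+1}(\ZZ)$ the form $G=F\circ A^{-1}$ defines a hypersurface projectively equivalent to $X=\{F=0\}$, hence smooth of degree $d$. So everything reduces to controlling the pencil of hyperplane sections cut out by $H_b=\{x_n=bx_0\}$. Writing $G_b(x_0,\dots,x_{n-1})=G(x_0,\dots,x_{n-1},bx_0)$, the projective closure of $Y_b$ is the hypersurface $X_b=\{G_b=0\}$ inside the $\mathbb{P}^{n-1}$ obtained by identifying $H_b$ with $[x_0:\dots:x_{n-1}]$, provided no degree is lost. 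Thus $\mathcal{B}$ is the union of the set where $\deg Y_b<d$ and the set where $X_b$ is singular, and I would bound each.

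For the degree, dehomogenizing shows $\deg Y_b<d$ exactly when $G$ vanishes identically on the axis $\Lambda=\{x_0=x_n=0\}$, a condition independent of $b$. Since $\Lambda$ is an $(n-2)$-plane and a smooth hypersurface of degree $d\geq 2$ contains only a proper subfamily of $(n-2)$-planes, a generic $A$ forces $\Lambda\not\subseteq\{G=0\}$, so $\deg Y_b=d$ for every $b$ and this part of $\mathcal{B}$ is empty. For the singular sections I would use the Gauss map $\gamma\colon\{G=0\}\to(\mathbb{P}^n)^\vee$, $q\mapsto[\nabla G(q)]$, whose image is the dual variety $X^\vee$. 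A chain-rule computation shows that $X_b$ is singular at $q\in H_b$ precisely when $\nabla G(q)$ is proportional to the coefficient vector of $H_b$ (Euler's identity then gives $G(q)=0$ for free, so base points on $\Lambda$ are automatically included), i.e. when $[H_b]\in X^\vee$. Hence the bad $b$ are exactly those for which the point $[H_b]$ on the line $\ell=\{[-b:0:\dots:0:1]\}\subseteq(\mathbb{P}^n)^\vee$ lies on $X^\vee$. Since $\dim X^\vee\leq n-1$, for generic $A$ the line $\ell$ is not contained in $X^\vee$, and then Bézout bounds the number of such $b$ by $\deg X^\vee=d(d-1)^{n-1}\ll_{n,d}1$.

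It remains to realize such a generic $A$ in $\operatorname{SL}_{n+1}(\ZZ)$ with $\max\abs{a_{ij}}\ll_{n,d}1$, and this uniformity over all $F$ (whose coefficients may be arbitrarily large) is the main obstacle. Each of the two genericity conditions asserts the nonvanishing of a polynomial in the entries of $A$: a suitable coefficient of $G|_\Lambda$ for the first, and a coefficient in $b$ of the discriminant $\Delta_G(-b,0,\dots,0,1)$ for the second. Their coefficients depend on $F$, but their degrees are bounded solely in terms of $n,d$, because the entries of $A^{-1}=\operatorname{adj}(A)$ are degree-$n$ in those of $A$ and the discriminant has degree $\ll_{n,d}1$ in the coefficients of $G$; moreover, by the previous paragraph neither polynomial is identically zero on $\operatorname{SL}_{n+1}$. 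Let $\Phi_F$ be their product, a nonzero polynomial of degree $\ll_{n,d}1$. I would fix once and for all a dominant polynomial map $\psi\colon\mathbb{A}^m\to\operatorname{SL}_{n+1}$ of degree $\ll_n 1$ defined over $\ZZ$, for instance a word $\prod_l(I+t_l e_{i_l j_l})$ in elementary matrices long enough to be dominant, which sends $\ZZ^m$ into $\operatorname{SL}_{n+1}(\ZZ)$. Then $\Phi_F\circ\psi$ is a nonzero polynomial of degree $D\ll_{n,d}1$, so by the Schwartz--Zippel lemma it has a non-root in the box $\{0,1,\dots,D\}^m$; the corresponding $A=\psi(t)$ lies in $\operatorname{SL}_{n+1}(\ZZ)$, has entries bounded by a degree-$\ll_n 1$ polynomial in $D$, hence $\max\abs{a_{ij}}\ll_{n,d}1$, and satisfies both genericity conditions. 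For this $A$ the discussion above gives that $\{G=0\}$ is smooth of degree $d$ and $\#\mathcal{B}\leq d(d-1)^{n-1}\ll_{n,d}1$, as required.
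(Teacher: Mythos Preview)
Your argument is correct. The paper does not prove this lemma in-text but simply cites Lemma~5 and the proof of Proposition~2 of Browning--Heath-Brown~\cite{BHB}, and your proof is essentially a self-contained reconstruction of that same argument: singular hyperplane sections correspond to points of the dual hypersurface $X^\vee$, a generic pencil meets $X^\vee$ in at most $\deg X^\vee\ll_{n,d}1$ points, and an integral $A$ with bounded entries avoiding the bad locus exists because that locus is cut out by polynomials of degree $\ll_{n,d}1$ in the matrix entries. The only phrase worth sharpening is ``a suitable coefficient'': \emph{which} coefficient of $G|_\Lambda$ (respectively, which coefficient in $b$ of the discriminant) is not identically zero on $\operatorname{SL}_{n+1}$ may well depend on $F$, but since there are only $O_{n,d}(1)$ candidate coefficients, each of degree $O_{n,d}(1)$ in the entries of $A$, and your geometric genericity argument guarantees that at least one of each kind survives, you may choose those and form $\Phi_F$ without affecting the uniform degree bound.
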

	\begin{proof}
		See Lemma 5 and the proof of Proposition 2 at page 409 of \cite{BHB}. 
	\end{proof}
	\begin{lemma}\label{lem:irr}
		Let $X$ be a smooth hypersurface in $\mathbb{P}^n$ for $n\geq 4$. For any hyperplane $H$, the projective variety $X\cap H$ in $\mathbb{P}^{n-1}$ is irreducible and $\deg X= \deg (X\cap H)$.
	\end{lemma}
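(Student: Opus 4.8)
The plan is to reduce both assertions to a single statement about the defining polynomial. We may assume $d=\deg X\geq 2$, since a smooth hypersurface of degree $1$ is a hyperplane and the statement is immediate there. Write $X=V(F)$ with $F\in\QQ[x_0,\dots,x_n]$ homogeneous of degree $d$. After a linear change of coordinates I would take $H=\{x_0=0\}$ and set $\bar F=F(0,x_1,\dots,x_n)$, a form of degree $d$ in the coordinates $x_1,\dots,x_n$ on $H\cong\mathbb{P}^{n-1}$. First I would check that $\bar F\neq 0$: otherwise $x_0\mid F$, and then $X$ would be singular along the nonempty locus $V(x_0)\cap V(F/x_0)$, contradicting smoothness. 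Thus $X\cap H=V(\bar F)$ is a hypersurface of degree $d$ in $\mathbb{P}^{n-1}$, and both assertions of the lemma follow at once if $\bar F$ is irreducible (indeed absolutely irreducible): then $V(\bar F)$ is an integral hypersurface of degree $\deg\bar F=d$.

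The heart of the argument is to bound the singular locus of $X\cap H$. Using the Euler relation, $\operatorname{Sing}(X\cap H)=V(\partial_1\bar F,\dots,\partial_n\bar F)$ inside $H$, and since $\partial_i\bar F=(\partial_iF)|_{x_0=0}$ this equals $W\cap H$, where $W=V(\partial_1 F,\dots,\partial_n F)\subseteq\mathbb{P}^n$. The key point is that $W$ is finite: because $X$ is smooth, the Jacobian criterion gives $V(\partial_0 F,\partial_1 F,\dots,\partial_n F)=\emptyset$, so $W$ is disjoint from $V(\partial_0 F)$ and hence contained in the affine open set $\mathbb{P}^n\setminus V(\partial_0 F)$. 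A closed subvariety of $\mathbb{P}^n$ contained in an affine open is simultaneously complete and affine, hence $0$-dimensional; thus $W$, and a fortiori $\operatorname{Sing}(X\cap H)$, is finite.

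Finally I would rule out reducibility. Suppose $\bar F=PQ$ with $P,Q$ nonconstant homogeneous forms over $\QQbar$. At any point of $V(P)\cap V(Q)$ both $P$ and $Q$ vanish, so $\bar F$ and all its partials vanish there (as $\partial_i(PQ)=P\,\partial_iQ+Q\,\partial_iP$); hence $V(P)\cap V(Q)\subseteq\operatorname{Sing}(X\cap H)$. But in $\mathbb{P}^{n-1}$ the two hypersurfaces $V(P),V(Q)$ meet in dimension at least $(n-2)+(n-2)-(n-1)=n-3\geq 1$, using $n\geq 4$. This contradicts the finiteness of $\operatorname{Sing}(X\cap H)$, so $\bar F$ is absolutely irreducible, which completes the proof. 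I expect the main obstacle to be exactly the control of $\operatorname{Sing}(X\cap H)$ for \emph{every} hyperplane $H$, rather than a general one where a Bertini-type argument would suffice; this is where the smoothness of $X$, via the emptiness of $V(\partial_0 F,\dots,\partial_n F)$, is essential. It is worth noting that $n\geq 4$ cannot be dropped: a tangent plane section of a smooth quadric surface in $\mathbb{P}^3$ is a pair of lines, matching the failure of $n-3\geq 1$ when $n=3$.
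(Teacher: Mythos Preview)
Your proof is correct, but it takes a different route from the paper's. The paper argues directly: writing $F=x_0G_0+G$ with $G=\bar F$, if $G=G_1G_2$ then any point with $x_0=G_0=G_1=G_2=0$ (which exists since these are four forms in $\mathbb{P}^n$ with $n\geq 4$) is a singular point of $X$, contradiction. Your argument is more structural: you first prove the stronger fact that $\operatorname{Sing}(X\cap H)$ is \emph{finite} for every hyperplane $H$, via the observation that $W=V(\partial_1 F,\dots,\partial_n F)$ misses the hypersurface $V(\partial_0 F)$ and is therefore simultaneously projective and affine; reducibility of $\bar F$ would then force $\operatorname{Sing}(X\cap H)$ to contain a variety of dimension $\geq n-3\geq 1$. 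The paper's argument is shorter and avoids the ``projective and affine implies finite'' step, while yours yields the bonus information that hyperplane sections of $X$ have at worst isolated singularities, which is a genuinely stronger conclusion even though the paper does not need it.
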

	\begin{proof}
		We can assume $H=\{x_0=0\}$ and that $X$ is defined by $F(x_0,x_1,\dots, x_n)=0$ with
		\[
		F(x_0,x_1,\dots,x_n)=x_0G_0(x_0,x_1,\dots, x_n)+G(x_1,\dots, x_n).
		\] 
		Thus, $X\cap H$ is defined by $G(x_1,\dots, x_n)=0$. Notice that $G$ is non-zero, since otherwise $X$ would be reducible, and then $\deg(G)=\deg(F)$. If $X\cap H$ would be reducible, then $G=G_1G_2$ with $G_1,G_2$ non-constant and so 
		\[
		F(x_0,x_1,\dots,x_n)=x_0G_0(x_0,x_1,\dots, x_n)+G_1(x_1,\dots, x_n)G_2(x_1,\dots, x_n).
		\] 
		A point that satisfies $x_0=G_0=G_1=G_2=0$ is singular, contradiction.
	\end{proof}

	\section{Affine threefolds}
	In this section we are going to study hypersurfaces in $\mathbb{A}^4$, which is the most important part of the proof of our main results. Many tools used in this section have been introduced by Salberger in \cite{salberger}. In particular, we show in Lemma \ref{lem:addhyper} (that is the analogue of \cite[Theorem 2.2]{salberger}) that integral points of bounded height lie on a set of surfaces and we combine this fact with the argument of \cite[Lemma 3.2]{salberger} to bound the cardinality of integral points of bounded height.
	
	Let $Y$ be an affine hypersurface in $\mathbb{A}^4$ defined over $\QQ$ of degree $d$ such that the projective closure of $Y$ is smooth. We say that a prime $p$ is non-singular if the reduction modulo $p$ of the projective closure of $Y$ is smooth. Let $q=p_1\cdot p_2\dots p_k$ be the product of distinct non-singular primes. For each $1\leq i\leq k$, let $P_i$ be a point in $Y(\F_{p_i})$. Let
	\[
	Y(B,P_1,\dots, P_k)=\{\uu{x}\in Y(\ZZ): \uu{x}\equiv P_i \mod p_i, ||\uu{x}||\leq B\},
	\] 
	where with $\uu{x}\equiv P_i \mod p_i$ we mean that $\uu{x}$ reduces to $P_i$ when we reduce $Y$ modulo $p_i$, and with $||\uu{x}||\leq B$ we mean $\abs{x_j}\leq B$ for all $1\leq j\leq 4$. Fix $\ve>0$ and let 
	\begin{equation}\label{eq:defK}
		K=B^{\frac{1}{\sqrt[3]{d}}+\ve}.
	\end{equation}
	\begin{lemma}\label{lem:addhyper}
		There exists a hypersurface $Y(P_1,\dots, P_k)$, that does not contain $Y$, of degree $O_{d,\ve}(K/q+1)$, and such that $Y(B,P_1,\dots, P_k)\subseteq Y(P_1,\dots, P_k) $.
	\end{lemma}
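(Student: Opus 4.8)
The plan is to mimic Salberger's global determinant method, i.e. the proofs of \cite[Theorem 2.2]{salberger} and \cite[Lemma 3.2]{salberger}, now applied to the affine threefold $Y\subseteq\mathbb{A}^4$. Set a degree parameter $D=O_{d,\ve}(K/q+1)$ and let $s$ be the dimension of the space $R_{\le D}$ of polynomials of degree $\le D$ on $Y$; since the projective closure of $Y$ is a smooth threefold of degree $d$, its Hilbert function gives $s=\tfrac{d}{6}D^3+O_d(D^2)$. The hypersurface $Y(P_1,\dots,P_k)$ should be cut out by a single nonzero $g\in R_{\le D}$ vanishing at every point of $Y(B,P_1,\dots,P_k)$: such a $g$ does not vanish on all of $Y$ (it is a nonzero element of the coordinate ring of the integral variety $Y$), so $\{g=0\}$ does not contain $Y$. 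Writing $M$ for the matrix whose rows are indexed by the points of $Y(B,P_1,\dots,P_k)$ and whose columns by a monomial basis of $R_{\le D}$, it therefore suffices to prove $\rank M<s$, which I would do by forcing every $s\times s$ minor of $M$ to vanish.

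First I would bound the $p_i$-adic valuation of such a minor from below. Because $p_i$ is a non-singular prime and $P_i\in Y(\F_{p_i})$, the reduction of $Y$ at $p_i$ is smooth at $P_i$, so one may choose local parameters there and pass, by unimodular column operations over $\ZZ_{p_i}$, to a basis of $R_{\le D}$ adapted to the filtration by order of vanishing at $P_i$. Every point of $Y(B,P_1,\dots,P_k)$ reduces to $P_i$, so a basis element of order $\ell$ takes values divisible by $p_i^{\ell}$ at all of them; hence the minor is divisible by $p_i^{\Omega}$, where $\Omega$ is the sum of the vanishing orders of the basis. Counting monomials on the threefold by order of vanishing — the orders run up to $L\approx\sqrt[3]{d}\,D$ with multiplicity $\approx\ell^2/2$ — yields $\Omega=\tfrac{1}{8}d^{4/3}D^4\,(1+o(1))$, and since the $p_i$ are distinct the minor is divisible by $q^{\Omega}$.

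Next I would bound the same minor from above at the archimedean place: its entries are values of degree-$\le D$ monomials at integer points of height $\le B$, so the permutation expansion, bounding the $j$-th column by $B^{\deg m_j}$, gives $\lvert\det\rvert\ll_s B^{\Sigma}$ with $\Sigma=\tfrac{d}{8}D^4\,(1+o(1))$ the sum of the degrees of the basis monomials. A nonzero minor would force $q^{\Omega}\le\lvert\det\rvert\ll_s B^{\Sigma}$, i.e. $\Omega\log q\le\Sigma\log B+O(s\log s)$; since $\Omega/\Sigma=\sqrt[3]{d}\,(1+o(1))$, the exponent $1/\sqrt[3]{d}$ in $K=B^{1/\sqrt[3]{d}+\ve}$ is exactly what makes this fail once $q\gtrsim K$, forcing every $s\times s$ minor to vanish and giving a hypersurface of degree $O_{d,\ve}(1)$ in that range. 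To obtain the sharp degree $O_{d,\ve}(K/q+1)$ for all $q$ — in particular the genuinely positive degree when $q<K$ — one replaces the crude ``all $s\times s$ minors vanish'' step by the finer rank estimate of \cite[Lemma 3.2]{salberger}, bounding $\rank M$ through minors whose size is dictated by the number of available points and the valuations of the low-order columns.

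The main obstacle is precisely this $q$-adic valuation estimate together with the balancing. One must verify that smoothness of the projective closure really yields, at each non-singular prime, the full order-of-vanishing divisibility for the \emph{affine} threefold (local parameters at $P_i$, and the threefold Hilbert count producing the exponent $\sqrt[3]{d}$), control uniformly in $q$ the constants hidden in the $O_d$ and $o(1)$ terms, and interpolate correctly between the regime where $K/q\gg1$ and the transition $K/q\approx1$. Once the valuation bound and this balancing are in place, the archimedean estimate and the production of $g$, hence of a hypersurface of degree $O_{d,\ve}(K/q+1)$ not containing $Y$, should be routine.
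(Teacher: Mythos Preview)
Your outline is correct and is precisely the determinant method that underlies this lemma, with the right threefold numerics ($s\sim dD^3/6$, $\Omega\sim d^{4/3}D^4/8$, $\Sigma\sim dD^4/8$, hence $\Omega/\Sigma\sim d^{1/3}$). The paper, however, does not re-derive any of this: it simply splits on whether $q<B^{1/\sqrt[3]{d}+\ve/2}$, citing \cite[Theorem~2.2]{salberger} directly for small $q$ (which already delivers the auxiliary hypersurface of degree $O_d\bigl(B^{1/\sqrt[3]{d}}q^{-1}\log(Bq)+\log(Bq)+1\bigr)=O_{d,\ve}(K/q+1)$), and \cite[Theorem~15]{cime} for large $q$, observing that Heath-Brown's argument there goes through verbatim when the single non-singular prime is replaced by a product of distinct non-singular primes, yielding degree $O_{d,\ve}(1)$. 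One small correction to your citations: the ``finer rank estimate'' giving the sharp degree for small $q$ is Salberger's Theorem~2.2, not his Lemma~3.2 --- the latter is the prime-selection iteration that the present paper adapts in Proposition~\ref{lem:aff4r}, not the single-hypersurface degree bound. What the citation route buys is exactly the uniform control of the $O_d$ and $o(1)$ terms and the interpolation between regimes that you flag as the main obstacle, so there is no need to redo that balancing here.
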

	\begin{proof}
		Assume $q<B^{\frac{1}{\sqrt[3]{d}}+\frac{\ve}{2}}$. Then, by \cite[Theorem 2.2]{salberger}, there exists $Y(P_1,\dots, P_k)$ as in the statement of the lemma with degree 
		\[
		O_d\left(\frac{B^{\frac{1}{\sqrt[3]{d}}}}{q}\log Bq+\log Bq+1\right)=O_{d,\ve}\left(\frac Kq+1\right).
		\]
		If $q\geq B^{\frac{1}{\sqrt[3]{d}}+\frac{\ve}{2}}$, then we can find $Y(P_1,\dots, P_k)$ of degree $O_{d,\ve}(1)$. The proof of this fact is basically identical to the proof of Theorem 15 in \cite{cime}. The only difference is that they are taking $q$ to be a non-singular prime while we are considering $q$ as a product of non-singular primes, but this does not change the proof.
	\end{proof}
	The next lemma is a modification of \cite[Lemma 3.2]{salberger}. We show that integral points of bounded height on an affine threefold lie on a certain set of surfaces or curves. A straightforward application of \cite[Lemma 3.2]{salberger} would produce $O_{d,\ve}(K)$ surfaces of degree $O_d(1)$ and curves of degree $O_{d,\ve}(K^2)$. Our bound produces $O_{d,\ve}(r^2K)$ surfaces of degree $O_d(1)$ and curves of degree $O_{d,\ve}(K^2/r^2)$, for $r$ a fixed prime. In our setting, this leads to a better bound, by choosing an appropriate $r$.
	
	Let $\pi_{\operatorname{sing}}$ be the product of all primes $p$ such that the reduction modulo $p$ of $Y$ is singular.
	\begin{proposition}\label{lem:aff4r}
		Let $Y$ be an affine hypersurface in $\mathbb{A}^4$ defined over $\QQ$ of degree $d$ such that the projective closure of $Y$ is smooth. Assume $\log \pi_{\operatorname{sing}}\ll_d \log B$. Let $d_1,d_2\geq 1$ and fix a non-singular prime $r\ll_{d,\ve}K^{1-\ve}$. Let $Y_{d_1,d_2}$ be
		the complement in $Y$ of the union of all irreducible curves on $Y$, defined over $\QQ$, and of degree at
		most $d_1-1$, and all irreducible surfaces on $Y$, defined over $\QQ$, of degree at
		most $d_2-1$. Then
		\begin{align*}
			N_{\aff}(Y_{d_1,d_2},B)\ll_{d,d_1,d_2,\ve}B^\ve\left(r^2B^{\frac{1}{\sqrt[3]{d}}+\frac{1}{\sqrt{d_2}}+\frac{1}{d_1}}+r^2B^{\frac{1}{\sqrt[3]{d}}+\frac{2}{\sqrt{d_2}}}+\frac{B^{\frac{4}{\sqrt[3]{d}}}}{r}+B^{\frac{3d_1-1+\sqrt[3]{d}}{d_1\sqrt[3]{d}}}\right).
		\end{align*}
	\end{proposition}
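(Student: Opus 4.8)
The plan is to run Salberger's determinant method on the threefold $Y$, following \cite[Lemma 3.2]{salberger}, but sharpened with the auxiliary prime $r$ as indicated before the statement. The engine is Lemma \ref{lem:addhyper}: for a product $q$ of distinct non-singular primes, the integral points of height at most $B$ lying in a single residue class modulo $q$ are contained in an auxiliary hypersurface, not containing $Y$, of degree $O_{d,\ve}(K/q+1)$, whose intersection with $Y$ is a surface. Grouping the points also by their reduction modulo the fixed prime $r$ — of which there are $O(r^3)$ classes, since $Y$ is a threefold — and balancing the two localisations, the aim of this first step is to show that the integral points of height at most $B$ on $Y$ lie on a union of $O_{d,\ve}(r^2K)$ surfaces of degree $O_d(1)$ together with, for each class modulo $r$, a curve of degree $O_{d,\ve}(K^2/r^2)$. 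The standing hypotheses $r\ll_{d,\ve}K^{1-\ve}$ and $\log\pi_{\operatorname{sing}}\ll_d\log B$ are what make this accounting work: the former keeps the auxiliary degrees at least $K^{\ve}$, and the latter ensures that discarding the singular primes costs only $B^{O(\ve)}$.

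Granting this covering, I would estimate the contributions of the surfaces and of the curves separately. A point counted by $N_{\aff}(Y_{d_1,d_2},B)$ lies on no irreducible surface of degree at most $d_2-1$, so on each covering surface it lies on a component of degree at least $d_2$, and it avoids the irreducible curves of degree at most $d_1-1$. Hence Lemma \ref{lem:surfaces}, applied with $e=d_1$ to each such component, bounds the relevant points on a single surface by $O_{d,d_1,d_2,\ve}(B^{2/\sqrt{d_2}+\ve}+B^{1/\sqrt{d_2}+1/d_1+\ve})$. Multiplying by the number $O_{d,\ve}(r^2K)$ of surfaces and inserting $K=B^{1/\sqrt[3]{d}+\ve}$ produces exactly the first two terms $r^2B^{1/\sqrt[3]{d}+1/\sqrt{d_2}+1/d_1}$ and $r^2B^{1/\sqrt[3]{d}+2/\sqrt{d_2}}$ of the stated bound.

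For the curves I would discard the components of degree at most $d_1-1$ and apply Lemma \ref{lem:poncurves} with $\delta=d_1$. The key is to use the congruence version of that lemma with modulus $r$, since the points on the curve attached to a given class are constrained modulo $r$: a single curve $C$ of degree $O_{d,\ve}(K^2/r^2)$ then contributes $O_{d,\ve}(B^{\ve}(\deg C\,(B/r)^{1/d_1}+(\deg C)^2))$. Summing the term $(\deg C)^2\ll K^4/r^4$ over the $O(r^3)$ classes yields $K^4/r$, the third term $B^{4/\sqrt[3]{d}}/r$. Summing the main term yields $r^3\cdot (K^2/r^2)(B/r)^{1/d_1}=r^{1-1/d_1}K^2B^{1/d_1}$, which, using $r\le K$, is at most $K^{3-1/d_1}B^{1/d_1}=B^{(3d_1-1+\sqrt[3]{d})/(d_1\sqrt[3]{d})}$, the fourth term. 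Collecting the four contributions gives the inequality.

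The genuinely hard part is the first step: producing the covering with the precise counts $O_{d,\ve}(r^2K)$ surfaces of degree $O_d(1)$ and curves of degree $O_{d,\ve}(K^2/r^2)$. This is exactly where the modification of \cite[Lemma 3.2]{salberger} lives, and it requires tracking how the $r$-adic localisation redistributes the points between the two-dimensional and one-dimensional parts of the cover, how the degree of the auxiliary hypersurfaces drops while their number grows, and the verification that none of them contains $Y$ and that all degrees come out as claimed. Once the covering is in place, the remaining two steps are routine bookkeeping with Lemmas \ref{lem:surfaces} and \ref{lem:poncurves} together with the arithmetic of the exponents.
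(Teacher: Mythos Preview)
Your outline matches the paper's strategy: run Salberger's determinant method on the threefold with the extra localisation at $r$, feed the resulting surfaces into Lemma~\ref{lem:surfaces} and the curves into the congruence form of Lemma~\ref{lem:poncurves}. The surface accounting ($\#\Gamma\ll r^2K$ surfaces of bounded degree) and the final exponent arithmetic are both correct.

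The description of the curve part of the covering is too coarse, and taken literally it would fail. You assert one curve of degree $O_{d,\ve}(K^2/r^2)$ per class $R\in Y(\F_r)$, with points constrained only modulo $r$. The paper does not, and as far as I can see cannot, produce this. Following \cite[Lemma 3.2]{salberger} one introduces a further sequence of non-singular primes $p_1,\dots,p_k\asymp\log B$ with $q=\prod p_i$ satisfying $K/(qr)\ll 1$ and $q\ll K(\log B)/r$, and the curves arise at each level $1\le j\le k$ as the intersections $\gamma(R,P_1,\dots,P_j)$ of distinct irreducible components of $Y\cap Y(R,P_1,\dots,P_{j-1})$ and $Y\cap Y(R,P_1,\dots,P_{j})$. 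Each such curve has degree $O\bigl(K^2/(r^2\prod_{i\le j}p_i\prod_{i\le j-1}p_i)\bigr)$, and its points are constrained modulo $r\prod_{i\le j}p_i$, not just modulo $r$. If one simply unions these curves over all $j$ and all $(P_1,\dots,P_j)$ for a fixed $R$, the total degree is of order $K^3/r^3$, not $K^2/r^2$; applying Lemma~\ref{lem:poncurves} to that union with modulus $r$ alone would give a $(\deg C)^2$ contribution of size $K^6/r^3$ rather than $K^4/r$, and the linear term would also be off by a factor $(K/r)^{1-1/d_1}$. The stated bounds emerge only by applying Lemma~\ref{lem:poncurves} to each $\gamma(R,P_1,\dots,P_j)$ individually, with the finer modulus $r\prod_{i\le j}p_i$, and then summing over $R$, $j$, and the $P_i$: it is this telescoping that makes $\sum(\deg\gamma)^2$ collapse to $O(K^4/r^4)$ per $R$. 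So the layered prime structure is the mechanism, not cosmetic bookkeeping; your ``one curve of degree $K^2/r^2$ per $R$'' heuristic happens to predict the right exponents but is not itself an argument.
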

	\begin{proof}
		Following \cite[Lemma 3.2, a-b-c]{salberger}, we can find a sequence of distinct primes $p_1,\dots, p_k$ with the following properties:
		\begin{itemize}
			\item For all $i\leq k$, $p_i$ is non-singular and $p_i\neq r$.
			\item For all $i\leq k$, $\log B\leq p_i\ll_{d,\ve} \log B$.
			\item $k\ll_{d,\ve} \log B$.
			\item Let $q=\prod_{1\leq i\leq k} p_i$. Then, 
			\[
			\frac{K}{qr}\ll_{d,\ve} 1
			\]
			and
			\[
			q\ll_{d,\ve} \log B\frac{K}{r}.
			\]
		\end{itemize}
		For all $R\in Y(\F_r)$ and $P_i\in Y(\F_{p_i})$, we fix $Y(R,P_1,\dots,P_k)$ as in Lemma \ref{lem:addhyper}.
		
		Fix $R\in Y(\F_r)$. Define $\{D_\gamma\}_{\gamma\in \Gamma_R}$ as the set of irreducible components of $Y\cap Y(R)$ which are contained in $Y(R, P_1,\dots, P_k)$ for a sequence of points $P_1,\dots, P_k$ with $P_i\in Y(\F_{p_i})$. By Lemma \ref{lem:addhyper}, \[\deg(Y(R, P_1,\dots, P_k))\ll_{d,\ve}\frac{K}{qr}+1\ll_{d,\ve}1\] and so $\deg(D_\gamma)\ll_{d,\ve}1$ for each $\gamma\in \Gamma_R$. Moreover $\Gamma_R$ has cardinality bounded by the cardinality of irreducible components of $Y\cap Y(R)$, that is bounded by \[
		\deg(Y\cap Y(R))\leq \deg(Y)\cdot \deg(Y(R)) \ll_{d,\ve}\frac{K}{r},
		\] applying again Lemma \ref{lem:addhyper}. Define 
		\[
		\Gamma=\cup_{R\in Y(\F_r)}\Gamma_R.
		\]
		We have $\#\Gamma\ll_{d,\ve}\#Y(\F_r)\frac{K}{r}$ and, for each $D_\gamma$ with $\gamma\in \Gamma$, we have $\deg(D_\gamma)\ll_{d,\ve}1$. 
		
		Let $1\leq j\leq k$ and, for all $1\leq i\leq j$, let $P_i$ be a point in $Y(\F_{p_i})$. Define $\gamma(P_1,\dots, P_j)$ as the union of all intersections of distinct irreducible components of $Y\cap Y(R,P_1,\dots, P_j)$ and $Y\cap Y(R,P_1,\dots, P_{j-1})$.
		Notice that $\gamma(R,P_1,\dots, P_j)$ has dimension at most $1$ and, using again Lemma \ref{lem:addhyper}, has degree bounded by
		\begin{align}\label{eq:deggamma}
			&\deg(Y)^2\deg(Y(R,P_1,\dots, P_j))\deg(Y(R,P_1,\dots, P_{j-1}))\nonumber\\&\ll_{d,\ve}\frac{K^2}{r^2\prod_{1\leq i\leq j}p_i\prod_{1\leq i\leq j-1}p_i}.
		\end{align}
		Define $\gamma_{d_1}(R,P_1,\dots, P_j)$ as the variety obtained by removing from $\gamma(R,P_1,\dots, P_j)$ all the dimension $1$ irreducible components of degree at most $d_1-1$ and define
		\begin{equation*}
			W_{d_1}(R,P_1,\dots, P_j)=\left\{\uu{x}\in \gamma_{d_1}(R,P_1,\dots, P_j):
			\begin{array}{l}
				\uu{x}\equiv P_i\mod p_i\, \forall 1\leq i\leq j,\\
				\uu{x}\equiv R\mod r
			\end{array}
			\right\}.
		\end{equation*}
		
		Given ${\uu{x}}\in Y\cap Y(R,P_1,\dots, P_j)$, define $Y_{\uu{x}}(R,P_1,\dots, P_j)$ as the irreducible component of $Y\cap Y(R,P_1,\dots, P_j)$ that contains ${\uu{x}}$.
		Let $\uu{x}\in Y(\ZZ)$ be a point of bounded height, take $P_i\in Y(\F_{p_i})$ such that $\uu{x}$ reduces to $P_i$ modulo $p_i$ and $R\in Y(\F_{r})$ such that $\uu{x}$ reduces to $R$ modulo $r$. If \[Y_{\uu{x}}(R)=Y_{\uu{x}}(R,P_1)=\dots=Y_{\uu{x}}(R,P_1,\dots, P_k),\] then $Y_{\uu{x}}(R)\in \Gamma_R$ (it is contained in $Y(R,P_1,\dots, P_k)$) and so $\uu{x}\in D_{\gamma}$ for some $\gamma\in \Gamma_R$. If not, there is $j\geq 1$ such that $Y_{\uu{x}}(R,P_1,\dots, P_j)\neq Y_{\uu{x}}(R,P_1,\dots, P_{j-1})$. So, $\uu{x}\in 	\gamma(R,P_1,\dots, P_j)$. If $\uu{x}$ does not lie on a curve of degree at most $d_1-1$, then $\uu{x}\in W_{d_1}(R,P_1,\dots, P_j)$.
		Whence, an integral point of bounded height that does not lie on a curve of degree at most $d_1-1$ or on a surface of degree at most $d_2-1$ must lie on $D_\gamma$ for $\gamma\in \Gamma$ and $\deg(D_\gamma)\geq d_2$, or on $W_{d_1}(R,P_1,\dots, P_j)$ for $R\in Y(\F_r)$ and $P_i\in Y(\F_{p_i})$.
		
		Let \[
		N_{\aff}(W_{d_1}(R,P_1,\dots, P_j),B)=\#\{\uu{x}\in W_{d_1}(R,P_1,\dots, P_j): ||\uu{x}||\leq B\}
		\]
		and, given $\gamma\in \Gamma$, let $D_{\gamma,d_1}$ be the complement in $D_\gamma$ of the union of all irreducible curves, defined over $\QQ$, of degree at most $d_1-1$.
		So,
		\begin{align}
			\label{eq:ineqd1d2}
			N_{\aff}(Y_{d_1,d_2},B)\leq& \sum_{\substack{\gamma\in \Gamma\\\deg(D_\gamma)\geq d_2}}N_{\aff}(D_{\gamma,d_1},B)\\&+\sum_{R\in Y(\F_r)}\sum_{1\leq j\leq k}\sum_{\substack{P_i\in Y(\F_{p_i})\\ 1\leq i\leq j}} N_{\aff}(W_{d_1}(R,P_1,\dots, P_j),B)\nonumber.
		\end{align}
		
		By Lemma \ref{lem:surfaces}, the integral points of bounded height on $D_\gamma$ that do not lie on curves of degree smaller than $d_1$ are \[O_{d,d_1,d_2,\ve}\left(B^{\frac{1}{\sqrt{d_2}}+\frac{1}{d_1}+\ve}+B^{\frac{2}{\sqrt{d_2}}+\ve}\right).\] 
		Since $$\#\Gamma\ll_{d,\ve}\#Y(\F_r)\frac{K}{r},$$ we obtain
		\begin{equation}
			\label{eq:ineqsurface}
			\sum_{\substack{\gamma\in \Gamma\\\deg(D_\gamma)\geq d_2}}N_{\aff}(D_{\gamma,d_1},B)\ll_{d,d_1,d_2,\ve}\#Y(\F_r)\left(\frac{K}{r}\right)\left(B^{\frac{1}{\sqrt{d_2}}+\frac{1}{d_1}+\ve}+B^{\frac{2}{\sqrt{d_2}}+\ve}\right).
		\end{equation}
		
		Now, we bound $ N_{\aff}(W_{d_1}(R,P_1,\dots, P_j),B)$. Recall that each $1$-dimensional irreducible component in $W_{d_1}(R,P_1,\dots, P_j)$ has degree at least $d_1$ and the degree of $\gamma(R,P_1,\dots, P_j)$ is 
		$
		O_{d,\ve}\left(\frac{K^2}{d(r,p_1,\dots, p_j)}\right)
		$
		with 
		\[
		d(r,p_1,\dots, p_j)=r^2\prod_{1\leq i\leq j}p_i\prod_{1\leq i\leq j-1}p_i
		\]
		by \eqref{eq:deggamma}. So, by Lemma \ref{lem:poncurves},
		\begin{align*}
			&N_{\aff}(W_{d_1}(R,P_1,\dots, P_j),B)\\
			\ll_{d,d_1,\ve}&B^\ve\left(\frac{K^2}{d(r,p_1,\dots, p_j)}\left(\frac{B}{r\prod_{1\leq i\leq j}p_i}\right)^{\frac{1}{d_1}}+\left(\frac{K^2}{d(r,p_1,\dots, p_j)}\right)^2\right).
		\end{align*}
		
		By \cite[Lemma 2.12]{salberger}, $\prod\#Y(\F_{p_i})\ll_{d}\prod p_i^3$ and then
		\begin{align*}
			&\sum_{\substack{P_i\in Y(\F_{p_i})\\ 1\leq i\leq j}} N_{\aff}(W_{d_1}(R,P_1,\dots, P_j),B)
			\\
			\ll_{d,d_1,\ve}&B^\ve\left(\frac{p_j\prod_{i\leq j} p_iK^2}{r^2}\left(\frac{B}{r\prod_{1\leq i\leq j}p_i}\right)^{\frac{1}{d_1}}+\frac{p_j^2K^4}{r^4\prod_{i\leq j} p_i}\right).
		\end{align*}
		Recalling that $p_i\ll_{d,\ve}\log B$, $\prod p_i\ll_{d,\ve}K\log B/r$, using that $\log B\ll_\ve B^\ve$, and including $B^\ve$ in the definition of $K$ we obtain
		\begin{align*}
			&\sum_{1\leq j\leq k}\sum_{\substack{P_i\in Y(\F_{p_i})\\ 1\leq i\leq j}} N_{\aff}(W_{d_1}(R,P_1,\dots, P_j),B)\\
			\ll_{d,d_1,\ve}&\sum_{1\leq j\leq k}B^\ve\left(\frac{p_j\prod_{i\leq j} p_iK^2}{r^2}\left(\frac{B}{r\prod_{1\leq i\leq j}p_i}\right)^{\frac{1}{d_1}}+\frac{p_j^2K^4}{r^4\prod_{i\leq j} p_i}\right)\\
			\ll_{d,d_1,\ve}&\sum_{1\leq j\leq k}B^\ve\log^2B\left(\frac{(\prod_{i\leq j} p_i)^{1-\frac{1}{d_1}}K^2 B^{\frac{1}{d_1}}}{r^{2+\frac{1}{d_1}}}+\frac{K^4}{r^4}\right)\\
			\ll_{d,d_1,\ve}&\sum_{1\leq j\leq k}\frac{K^{3-\frac{1}{d_1}}}{r^{3}} B^{\frac{1}{d_1}}+\frac{K^4}{r^4}\\
			\ll_{d,d_1,\ve}&k\left(\frac{K^{3-\frac{1}{d_1}}}{r^{3}} B^{\frac{1}{d_1}}+\frac{K^4}{r^4}\right).
		\end{align*}
		Recall $k\ll_{d,\ve}\log B$ and so
		\begin{align*}
			&\sum_{R\in Y(\F_r)}\sum_{1\leq j\leq k}\sum_{\substack{P_i\in Y(\F_{p_i})\\ 1\leq i\leq j}} N_{\aff}(W_{d_1}(R,P_1,\dots, P_j),B)\\\ll_{d,d_1,\ve}&
			\sum_{R\in Y(\F_r)}\log B\left(\frac{K^{3-\frac{1}{d_1}}}{r^{3}} B^{\frac{1}{d_1}}+\frac{K^4}{r^4}\right)\\\ll_{d,d_1,\ve}& \#Y(\F_r)\left(\frac{K^{3-\frac{1}{d_1}}}{r^{3}} B^{\frac{1}{d_1}}+\frac{K^4}{r^4}\right).
		\end{align*}
		By \eqref{eq:ineqd1d2} and \eqref{eq:ineqsurface},
		\begin{equation}\label{eq:endaff}
			N_{\aff}(Y_{d_1,d_2},B)\ll_{d,d_1,d_2,\ve}\#Y(\F_r)\left(\frac{KB^{\frac{1}{\sqrt{d_2}}+\frac{1}{d_1}}}{r}+\frac{KB^{\frac{2}{\sqrt{d_2}}}}{r}+\frac{K^{3-\frac{1}{d_1}}}{r^{3}} B^{\frac{1}{d_1}}+\frac{K^4}{r^4}\right)
		\end{equation}
		and we conclude using that $\#Y(\F_r)\ll_{d}r^3$.
	\end{proof}
	\begin{remark}
		Taking $r=O_{d,\ve}(B^\ve)$, Proposition \ref{lem:aff4r} would be identical to \cite[Lemma 3.2]{salberger}. Taking $r$ as large as possible, say $r\approx O_{d,\ve}(K^{1-\ve})$, Proposition \ref{lem:aff4r} could be proved using Heath-Brown's determinant method \cite{cime}. 
	\end{remark}
	Let $N_{\aff,4}(Y,B)$ be the cardinality of the set of integral points of bounded height in $Y$ that do not lie on curves of degree at most $3$ on $Y$. Recall that $\theta(d)$ is defined in \eqref{eq:theta}.
	\begin{proposition}\label{lem:finalaff4r}
		Let $Y$ be an affine hypersurface defined over $\QQ$ of degree $d\geq 6$ in $\mathbb{A}^4$ such that the projective closure of $Y$ is smooth. Then, for all $\ve>0$,
		\[
		N_{\aff,4}(Y,B)\ll_{d,\ve} B^{\theta(d)+1+\ve}.
		\]
	\end{proposition}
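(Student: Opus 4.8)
The plan is to split according to the size of $d$, counting throughout only the points off the degree-$\le 3$ curves of $Y$. A simplification that makes Proposition \ref{lem:aff4r} directly usable comes from Lemma \ref{lem:NF}: since the projective closure $\bar Y$ is a smooth hypersurface in $\PP^4$, it contains no dimension-$2$ subvariety of degree less than $d$, and hence $Y$ contains no surface of degree $\le d-1$ at all. Consequently, the variety $Y_{4,d}$ of Proposition \ref{lem:aff4r}, the complement of all curves of degree $\le 3$ together with all surfaces of degree $\le d-1$, is simply the complement of the degree-$\le 3$ curves, so $N_{\aff}(Y_{4,d},B)=N_{\aff,4}(Y,B)$. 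When invoking Proposition \ref{lem:aff4r} below we may assume $\log \pi_{\operatorname{sing}}\ll_d \log B$, the only hypothesis of that proposition not already present here; arranging this is a routine technical point that does not affect the exponents.

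For the small range $6\le d<9$ I would avoid the determinant method and slice instead. For each integer $b$ with $\abs{b}\le B$ set $S_b=Y\cap\{x_4=b\}$, an affine surface in $\{x_4=b\}\cong\AAA^3$. Its projective closure is the hyperplane section $\bar Y\cap\{x_4=bx_0\}$, which by Lemma \ref{lem:irr} is irreducible of degree exactly $d$; thus every $S_b$ is an irreducible surface of degree $d$, with no exceptional slices. Each point of $Y_4$ with $x_4=b$ lies on $S_b$ and off the degree-$\le 3$ curves of $S_b$ (such a curve would be a degree-$\le 3$ curve of $Y$), so Lemma \ref{lem:surfaces} with $e=4$ bounds its contribution by $O_{d,\ve}\big(B^{\frac{2}{\sqrt d}+\ve}+B^{\frac{1}{\sqrt d}+\frac14+\ve}\big)$. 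Summing over the $O(B)$ values of $b$ and using $\frac{2}{\sqrt d}>\frac{1}{\sqrt d}+\frac14$ for $d<16$ gives $N_{\aff,4}(Y,B)\ll_{d,\ve}B^{1+\frac{2}{\sqrt d}+\ve}$, which is exactly $B^{\theta(d)+1+\ve}$ in this range.

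For $d\ge 9$ I would apply Proposition \ref{lem:aff4r} with $d_1=4$ and $d_2=d$, obtaining, for any admissible non-singular prime $r\ll_{d,\ve}K^{1-\ve}$,
\[
N_{\aff,4}(Y,B)\ll_{d,\ve} B^{\ve}\left(r^2 B^{\frac{1}{\sqrt[3]{d}}+\frac{1}{\sqrt d}+\frac14}+r^2 B^{\frac{1}{\sqrt[3]{d}}+\frac{2}{\sqrt d}}+\frac{B^{\frac{4}{\sqrt[3]{d}}}}{r}+B^{\frac{11}{4\sqrt[3]{d}}+\frac14}\right).
\]
The final term is fixed, the first two grow in $r$ and the third decays, so one takes $r\approx B^{\rho}$ to balance the largest growing term against $\tfrac1r B^{4/\sqrt[3]{d}}$. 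For $9\le d<16$ the second term dominates the first (as $\tfrac{2}{\sqrt d}>\tfrac14$), and balancing with $\rho=\tfrac{1}{\sqrt[3]{d}}-\tfrac{2}{3\sqrt d}$ yields the common exponent $\tfrac{3}{\sqrt[3]{d}}+\tfrac{2}{3\sqrt d}=\theta(d)+1$. For $16\le d<50$ the first term dominates, and balancing with $\rho=\tfrac{1}{\sqrt[3]{d}}-\tfrac{1}{3\sqrt d}-\tfrac{1}{12}$ gives $\tfrac{3}{\sqrt[3]{d}}+\tfrac{1}{3\sqrt d}+\tfrac{1}{12}$, which equals $\theta(d)+1$ for $16\le d<20$ and is $\le \tfrac{11}{4\sqrt[3]{d}}+\tfrac14=\theta(d)+1$ once $d\ge 20$. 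For $d\ge 50$ one takes this balancing $\rho$ when it is positive and $\rho=0$ otherwise, and in either case every exponent is at most $1=\theta(d)+1$. In each range one verifies $\rho\in[0,\tfrac{1}{\sqrt[3]{d}})$, so a non-singular prime $r\approx B^{\rho}$ exists and is admissible (there are $\gg B^{\rho}/\log B$ primes near $B^{\rho}$ against only $O_d(\log B)$ singular ones), and that the non-dominant terms stay below $B^{\theta(d)+1}$.

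The genuinely load-bearing inputs are Lemma \ref{lem:NF}, which removes the surface terms for free, the optimal curve bound underlying Proposition \ref{lem:aff4r}, and the observation that near $d=6$ a direct slicing beats the determinant method. I expect the main obstacle to be the bookkeeping at the optimal $\rho$: one must check that all four exponents stay simultaneously below $\theta(d)+1$, and it is precisely which term dominates that forces the breakpoints $d=9,16,20,50$ appearing in the definition of $\theta(d)$.
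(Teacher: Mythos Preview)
Your approach matches the paper's: Lemma~\ref{lem:NF} reduces $N_{\aff,4}(Y,B)$ to $N_{\aff}(Y_{4,d},B)$, Proposition~\ref{lem:aff4r} with $(d_1,d_2)=(4,d)$ and a range-dependent choice of $r$ handles $d\ge 9$, and direct slicing handles $6\le d\le 8$. Two remarks are worth making.

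The ``routine technical point'' you defer is spelled out in the paper and is not entirely content-free: either all integral points of bounded height on $Y$ are contained in another threefold of degree $\le d$ not containing $Y$, in which case they lie on $O_d(1)$ irreducible surfaces on $Y$, each of degree between $d$ and $d^2$ by Lemma~\ref{lem:NF}, and Lemma~\ref{lem:surfaces} with $e=4$ already gives $N_{\aff,4}(Y,B)\ll_{d,\ve}B^{2/\sqrt d+\ve}+B^{1/\sqrt d+1/4+\ve}$, well within the claimed bound; or they are not, and then \cite[Lemma~3.1(b)]{salberger} yields $\log\pi_{\operatorname{sing}}\ll_d\log B$.

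Your slicing for $6\le d\le 8$ is in fact a bit cleaner than the paper's version: you only use that each slice $S_b$ is irreducible of degree $d$ (via Lemma~\ref{lem:irr}) and apply Lemma~\ref{lem:surfaces} with $e=4$ directly, whereas the paper first invokes Lemma~\ref{lem:B} to arrange that all but $O_d(1)$ slices have smooth projective closure, handles the exceptional slices via Pila's bound, and on the smooth slices uses Colliot-Th\'el\`ene's finiteness of curves of degree $\le d-2$. The paper also chooses $r\approx B^{5/(4\sqrt[3]{d})-1/4}$ on the range $20\le d<125$ (balancing the third term against the fixed fourth term) rather than your $r\approx B^{1/\sqrt[3]{d}-1/(3\sqrt d)-1/12}$; both choices yield the required $B^{11/(4\sqrt[3]{d})+1/4}$, so this is cosmetic.
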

	\begin{proof}
		Assume that the integral points of bounded height on $Y$ are contained in another threefold of degree at most $d$. Then, the points of bounded height lie on a surface of degree at most $d^2$. By Lemma \ref{lem:NF}, there are no surfaces on $Y$ of degree less than $d$. The integral points of bounded height on an irreducible surface of degree at least $d$ and at most $d^2$ that do not lie on curves of degree at most $3$ are
		\[
		O_{d,\ve}\left(B^{\frac{1}{\sqrt{d}}+\frac{1}{4}+\ve}+B^{\frac{2}{\sqrt{d}}+\ve}\right)
		\]
		by Lemma \ref{lem:surfaces} and the proposition easily follows. So, we can assume that the integral points of bounded height on $Y$ are not contained in another threefold of degree at most $d$, thus we can apply \cite[Lemma 3.1 (b)]{salberger}. Hence, $\log \pi_{\operatorname{sing}}\ll_{d,\ve}\log B$ and we can apply Proposition \ref{lem:aff4r}.
		
		By Lemma \ref{lem:NF}, there are no surfaces on $Y$ of degree less than $d$.
		So,
		\[
		N_{\aff,4}(Y,B)\ll_{d,\ve}N_{\aff}(Y_{4,d},B).
		\]
		Applying Proposition \ref{lem:aff4r} with $d_1=4$ and $d_2=d$ we obtain
		\[
		N_{\aff}(Y_{4,d},B)\ll_{d,\ve}B^\ve\left(r^2B^{\frac{1}{\sqrt[3]{d}}+\frac{1}{\sqrt{d}}+\frac{1}{4}}+r^2B^{\frac{1}{\sqrt[3]{d}}+\frac{2}{\sqrt{d}}}+\frac{B^{\frac{4}{\sqrt[3]{d}}}}{r}+B^{\frac{11}{4\sqrt[3]{d}}+\frac{1}{4}}\right).
		\]
		If $d\geq 125$, we can take by \cite[Lemma 3.1]{salberger} a non-singular prime $r\ll_{d,\ve}B^{\ve}$, and we obtain
		\begin{align}\label{eq:125}
			N_{\aff}(Y_{4,d},B)&\ll_{d,\ve}B^\ve\left(B^{\frac{1}{\sqrt[3]{d}}+\frac{1}{\sqrt{d}}+\frac{1}{4}}+B^{\frac{1}{\sqrt[3]{d}}+\frac{2}{\sqrt{d}}}+B^{\frac{4}{\sqrt[3]{d}}}+B^{\frac{11}{4\sqrt[3]{d}}+\frac{1}{4}}\right)\nonumber\\&
			\ll_{d,\ve}B^{\frac{11}{4\sqrt[3]{d}}+\frac{1}{4}+\ve}.
		\end{align}
		If $20\leq d<125$, take $r$ a non-singular prime such that
		\[
		B^{\frac{5}{4\sqrt[3]{d}}-\frac{1}{4}}<r\ll_{d,\ve}B^{\frac{5}{4\sqrt[3]{d}}-\frac{1}{4}},
		\]
		that exists by \cite[Lemma 3.1]{salberger}. Notice $r\ll_{d,\ve}K^{1-\ve}$, so we can apply Proposition \ref{lem:aff4r}. Thus,
		\[
		N_{\aff}(Y_{4,d},B)\ll_{d,\ve}B^{\frac{7}{2\sqrt[3]{d}}+\frac{1}{\sqrt{d}}-\frac{1}{4}+\ve}+B^{\frac{7}{2\sqrt[3]{d}}+\frac{2}{\sqrt{d}}-\frac{1}{2}+\ve}+B^{\frac{11}{4\sqrt[3]{d}}+\frac{1}{4}+\ve}.
		\]
		Since $20\leq d$, by direct computation
		\begin{align*}
			N_{\aff}(Y_{4,d},B)&\ll_{d,\ve}B^{\frac{7}{2\sqrt[3]{d}}+\frac{1}{\sqrt{d}}-\frac{1}{4}+\ve}+B^{\frac{7}{2\sqrt[3]{d}}+\frac{2}{\sqrt{d}}-\frac{1}{2}+\ve}+B^{\frac{11}{4\sqrt[3]{d}}+\frac{1}{4}+\ve}\ll_{d,\ve}B^{\frac{11}{4\sqrt[3]{d}}+\frac{1}{4}+\ve}
		\end{align*}	
		and so
		\[
		N_{\aff,4}(Y,B)\ll_{d,\ve}B^{\frac{11}{4\sqrt[3]{d}}+\frac{1}{4}+\ve}.
		\]
		Combining this with \eqref{eq:125}, we get that for $d\geq 20$, it holds
		\begin{equation}\label{eq:20}
			N_{\aff,4}(Y,B)\ll_{d,\ve}B^{\frac{11}{4\sqrt[3]{d}}+\frac{1}{4}+\ve}.
		\end{equation}
		Notice in particular that, for $d\geq 50$, we have $N_{\aff,4}(Y,B)\ll_{d,\ve} B^{1+\ve}$.
		If $16> d\geq 9$, we conclude by taking
		\[
		B^{\frac{1}{\sqrt[3]{d}}-\frac{2}{3\sqrt{d}}}<r\ll_{d,\ve}B^{\frac{1}{\sqrt[3]{d}}-\frac{2}{3\sqrt{d}}},
		\] 
		and if $20> d\geq 16$, we conclude by taking
		\[
		B^{\frac{1}{\sqrt[3]{d}}-\frac{1}{3\sqrt{d}}-\frac{1}{12}}<r\ll_{d,\ve}B^{\frac{1}{\sqrt[3]{d}}-\frac{1}{3\sqrt{d}}-\frac{1}{12}}.
		\] 
		
		It remains to deal with the case $6\leq d\leq 8$. In this case, we use a different method that leads to a better bound. Assume that $Y$ is defined by $f(x_1,\dots ,x_4)=0$ and define $Y_b=Y\cap \{x_4=b\}$, that is an irreducible surface of degree $d$ by Lemma \ref{lem:irr}. If the projective closure of $Y_b$ is not smooth, the integral points of bounded height are $O_{d,\ve}(B^{1+\frac{1}{d}+\ve})$ by \cite[Theorem A]{pilaaff}. By Lemma \ref{lem:B}, we can assume that there are $O_d(1)$ values of $b$ for which the projective closure of $Y_b$ is not smooth. Assume now that the projective closure of $Y_b$ is smooth. By \cite[Appendix]{HBannals}, there are $O_d(1)$ irreducible curves of degree $\leq d-2$ on $Y_b$. Hence, by Lemma \ref{lem:surfaces}, the integral points outside curves of degree at most $3$ on $Y_b$ are
		\[
		O_{d,\ve}\left(B^{\frac{1}{4}+\ve}+B^{\frac{1}{\sqrt{d}}+\frac{1}{d-2}+\ve}+B^{\frac{2}{\sqrt{d}}+\ve}\right)\ll_{d,\ve}B^{\frac{2}{\sqrt{d}}+\ve}
		\]
		and so
		\[
		N_{\aff,4}(Y,B)=\sum_{\abs{b}\leq B}N_{\aff,4}(Y_b,B)\ll_{d,\ve}B^{1+\frac{2}{\sqrt{d}}+\ve}.
		\qedhere
		\]
	\end{proof}
	Notice that the proof of the previous Proposition holds also for $d=5$ with $\theta(d)=2/\sqrt{d}$.
	\begin{remark}
		We briefly compare the bound of Proposition \ref{lem:finalaff4r} with the bounds that one would obtain with different methods. If we slice $Y$ with $O(B)$ hyperplanes (by fixing one variable) and then apply the determinant method we would get \[N_{\aff,4}(Y,B)\ll_{d,\ve}B^{\frac{5}{4}+\ve}+B^{1+\frac{2}{\sqrt{d}}+\ve},\] see the end of the proof of Proposition \ref{lem:finalaff4r}. This bound is worse than the one in Proposition \ref{lem:finalaff4r} for all $d\geq 9$. 
		
		One could also apply Heath-Brown's determinant method \cite{cime}. 
		The integral points of bounded height on $Y$ are contained in $O_{d,\ve}(B^{\frac{3}{\sqrt[3]{d}}})$ surfaces of degree $O_\ve(1)$. 
		After removing the points on curves of degree less than $3$, the integral points of bounded height on each surface are $O_{d,\ve}(B^{\frac{2}{\sqrt{d}}+\ve}+B^{\frac{1}{4}+\frac{1}{\sqrt{d}}+\ve})$ and so we would get
		\[
		N_{\aff,4}(Y,B)\ll_{d,\ve}B^{\frac{3}{\sqrt[3]{d}}+\frac{2}{\sqrt{d}}+\ve}+B^{\frac{3}{\sqrt[3]{d}}+\frac{1}{4}+\frac{1}{\sqrt{d}}+\ve}.
		\]Also this bound is worse than the one in Proposition \ref{lem:finalaff4r} for all $d\geq 6$. 
	\end{remark}
	Now, we prove the analogue of Theorem \ref{thm:mainaff} for hypersurfaces in $\mathbb{A}^4$. The proof of Theorem \ref{thm:mainaff}, and also of Theorem \ref{thm:main}, will follow from the next lemma.
	\begin{lemma}\label{rem:counterexample}
		Let $Y$ be an affine hypersurface defined over $\QQ$ of degree $d\geq 6$ in $\mathbb{A}^4$ such that the projective closure of $Y$ is smooth. Then, for all $\ve>0$,
		\[
		N_{\aff}(Y,B)\ll_{d,\ve} B^{\theta(d)+1+\ve},
		\]
		unless the following hold. After a linear change of variables, $Y$ is defined by an equation of the form
		\[
		f(x_1,x_2,x_3,x_4)=f_0(x_1,x_2)+x_3g(x_1,x_2,x_3,x_4).
		\]
		Let $C\subseteq\mathbb{A}^2$ be the curve defined by $f_0(x_1,x_2)=0$. Then, $C$ has degree $d$ and its projective closure is smooth. In this case,
		\[
		N_{\aff}(Y,B)\ll_{d,\ve} B^{\theta(d)+1+\ve}+BN_{\aff}(C,B).
		\]
		In particular,
		\[
		N_{\aff}(Y,B)\ll_{d,\ve} B^{\theta(d)+1+\ve}+B^{1+\frac 1d+\ve}.
		\]
	\end{lemma}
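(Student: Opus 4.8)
The plan is to split the integral points of bounded height into those lying off the low-degree curves, which are controlled by Proposition \ref{lem:finalaff4r}, and those lying on the union $W$ of all curves on $Y$ whose irreducible components have degree at most three. First I would write $N_{\aff}(Y,B)\le N_{\aff,4}(Y,B)+N_{\aff}(W,B)$ and apply Proposition \ref{lem:finalaff4r} to get $N_{\aff,4}(Y,B)\ll_{d,\ve}B^{\theta(d)+1+\ve}$. The whole content then concerns $N_{\aff}(W,B)$: I claim it is $O_{d,\ve}(B^{\theta(d)+1+\ve})$ unless $Y$ carries a surface that is cylindrical over a curve, in which case that surface produces the stated structure and the correction term $BN_{\aff}(C,B)$.

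Next I would decompose $W$ into irreducible components exactly as in the proof of Lemma \ref{lem:remaininglinesaff}. By \cite[Lemma 6.1]{salAENS} the reduced structure of $W$ is a proper closed subvariety of $Y$ of degree $O_d(1)$, so there are only $O_d(1)$ components, each of dimension at most $2$. Components of dimension at most $1$ contribute $O(B)$ by the trivial bound \eqref{eq:trivialaff}. A two-dimensional component has degree at least $d$ by Lemma \ref{lem:NF}; if it is \emph{not} cylindrical over a curve, then \eqref{eq:affdimgrowth} bounds its points by $O(B^{1+\ve})$. Since $\theta(d)\ge 0$ for $d\ge 6$, all of these contributions are absorbed into $B^{\theta(d)+1+\ve}$. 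Thus the only components that can break the clean bound are two-dimensional components cylindrical over a curve, which I must analyse.

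For such a cylindrical component I would rerun the argument of Lemma \ref{lem:cylsing} with $n=4$: after a linear change of variables $Y$ is cut out by $f=f_0(x_1,x_2)g_0+x_3g_1$, with projective closure $F=F_0G_0+x_3G_1$, where $F_0$ has degree at least $2$ and $G_1$ is non-constant. The difference from Lemma \ref{lem:cylsing} is that for $n=4$ the locus $\{x_0=x_1=x_2=x_3=0\}$ is the single point $Q=[0:0:0:0:1]$, so no singularity is forced there; indeed $Q\in X$ and smoothness of $X$ forces $G_1(Q)\neq 0$, so the cylindrical case can genuinely occur. However, every point of $\{F_0=x_3=G_0=G_1=0\}$ is singular on $X$, and in $\mathbb{P}^4$ four hypersurfaces always meet; as $F_0$ and $G_1$ are non-constant, smoothness forces $G_0$ to be a nonzero constant, which I absorb to obtain the stated form $f=f_0(x_1,x_2)+x_3g$. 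Then the component is $W_1=\{f_0=0,\,x_3=0\}$, the cylinder $C\times\mathbb{A}^1$ over $C=\{f_0=0\}\subseteq\mathbb{A}^2$. Here $\deg C=\deg W_1\ge d$ by Lemma \ref{lem:NF} while $\deg f_0\le d$, so $\deg C=d$; and $C$ is irreducible since $W_1$ is. Finally I would prove the projective closure of $C$ is smooth: if $[a_0:a_1:a_2]$ were singular on $\{F_0=0\}\subseteq\mathbb{P}^2$, the line $L$ joining $[a_0:a_1:a_2:0:0]$ to $Q$ lies on $X$, and along $L$ the gradient of $F$ reduces to $(0,0,0,G_1,0)$; since $G_1$ restricts to a binary form of degree $d-1\ge 1$ on $L$ that is nonzero (it is nonzero at $Q$), it vanishes somewhere on $L$, producing a singular point of $X$, a contradiction.

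It remains to count. The cylindrical component contributes only points $(x_1,x_2,0,x_4)$ with $(x_1,x_2)\in C(\ZZ)$ and $|x_4|\le B$, hence at most $(2B+1)N_{\aff}(C,B)\ll BN_{\aff}(C,B)$, and summing over the $O_d(1)$ such components preserves this. Combining with the previous paragraph gives
\[
N_{\aff}(Y,B)\ll_{d,\ve}B^{\theta(d)+1+\ve}+BN_{\aff}(C,B).
\]
For the final estimate I would apply Lemma \ref{lem:poncurves} with $\delta=d$, legitimate because $C$ is irreducible of degree $d$, obtaining $N_{\aff}(C,B)\ll_{d,\ve}B^{1/d+\ve}$ and therefore $N_{\aff}(Y,B)\ll_{d,\ve}B^{\theta(d)+1+\ve}+B^{1+1/d+\ve}$. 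I expect the main obstacle to be the structural step: following the Lemma \ref{lem:cylsing} computation carefully enough at $n=4$ to pin down the exact shape $f=f_0(x_1,x_2)+x_3g$, and then establishing irreducibility and smoothness of $C$, which is precisely what licenses the sharp curve bound of Lemma \ref{lem:poncurves}.
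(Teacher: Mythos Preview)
Your proof is correct and follows essentially the same architecture as the paper: apply Proposition~\ref{lem:finalaff4r} for $N_{\aff,4}$, then analyse the components of the locus $W$ swept out by low-degree curves, reducing to the cylindrical case. There are two minor differences worth noting. First, to force $G_0$ constant you invoke the projective dimension theorem (four hypersurfaces in $\mathbb{P}^4$ meet); the paper instead expands $f=\sum x_4^i h_i(x_1,x_2,x_3)$ and argues purely by degree: once $W=V(f_0,x_3)$ with $\deg f_0\ge d$ (Lemma~\ref{lem:NF}) and $x_3\mid h_i$ for $i\ge1$, one gets $f=f_0 h_0'+x_3 g$ with $h_0'$ forced to be a nonzero constant by $\deg f=d$. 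Second, you supply a detailed proof that the projective closure of $C$ is smooth (via the line $L$ through $[a_0{:}a_1{:}a_2{:}0{:}0]$ and $Q$, on which $G_1$ is a nonzero binary form of degree $d-1$ and hence vanishes), whereas the paper simply asserts this as a consequence of smoothness of the closure of $Y$; your argument is exactly the justification the paper omits.
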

	\begin{proof}
		By Proposition \ref{lem:finalaff4r}, we just need to bound points on curves of degree at most $3$. Let $W$ be an irreducible component of the reduced scheme structure of the Hilbert's scheme of points on curves of degree at most $3$, that has degree $O_d(1)$. As in the proof of Lemma \ref{lem:remaininglinesaff}, $W$ cannot have dimension $3$ and if it has dimension $\leq 1$ we conclude by the trivial bound \eqref{eq:trivialaff}. So, we assume $\dim W=2$.
		If $W$ is not cylindrical over a curve, we would conclude that 
		\[
		N_{\aff}(Y,B)\ll_{d,\ve} B^{\theta(d)+1+\ve},
		\]
		following the proof of Lemma \ref{lem:remaininglinesaff}.
		Assume that $W$ is cylindrical over a curve. Thus, after a linear change of variables, we can assume that $W=V(f_i(x_1,x_2,x_3))_{1\leq i\leq s}$. Assume that $Y$ is defined by $f=0$ with
		\[
		f(x_1,x_2,x_3,x_4)=\sum_{i=0}^d x_4^ih_i(x_1,x_2,x_3).
		\]
		Since $W\subseteq Y$, then $W\subseteq \{h_i=0\}$ for all $0\leq i\leq d$. Hence, $h_d=0$. If $h_{d-1}=0$, then the point $[x_0,x_1,x_2,x_3,x_4]=[0,0,0,0,1]$ in the projective closure of $Y$ would be singular, contradiction. So, $h_{d-1}$ must have degree $1$ and, after a linear change of variables, we can assume $h_{d-1}(x_1,x_2,x_3)=x_3$. So, $W\subseteq \{x_3=0\}$ and then $W=V(f_0(x_1,x_2),x_3)$. Since $W$ has degree at least $d$ by Lemma \ref{lem:NF}, it follows that $f_0$ has degree at least $d$. For all $i\geq 1$, we have $\deg(h_i)<d$ and then $x_3\mid h_i$. Thus,
		\[
		f(x_1,x_2,x_3,x_4)=f_0(x_1,x_2)h_0'(x_1,x_2)+x_3g(x_1,x_2,x_3,x_4).
		\]
		Since $\deg(f_0)\geq d$, we have $h_0'$ is a constant, and it is not zero since otherwise $Y$ would not be irreducible. Hence,
		\[
		f(x_1,x_2,x_3,x_4)=f_0(x_1,x_2)+x_3g(x_1,x_2,x_3,x_4)
		\]
		with $f_0$ an irreducible polynomial of degree $d$. Since the projective closure of $Y$ is smooth, we have that the projective closure of $C$, the affine curve defined by $f_0(x_1,x_2)=0$, is smooth. Notice in particular that the genus of $C$ is strictly larger than $1$. We have $N_{\aff}(W,B)=(2B+1)N_{\aff}(C,B)$ and so
		\[
		N_{\aff}(Y,B)\ll_{n,d,\ve} B^{\theta(d)+1+\ve}+ BN_{\aff}(C,B).
		\]
		Finally, the last line of the statement follows from Lemma \ref{lem:poncurves}.
	\end{proof}
	\section{General case}
	The goal of this section is to prove Theorems \ref{thm:main} and \ref{thm:mainaff}. The cases for $n\geq 5$ will follow from the affine case for $n=4$. We will reduce to that case by slicing our hypersurface with hyperplanes. This slicing procedure is similar to the one done in \cite[Section 3]{BHB} by Browning and Heath-Brown.
	
	The projective case for $n=4$, which we will establish in the next lemma, follows directly from Salberger's work in \cite[Section 3]{salAENS}.
	This result should not be regarded as original, as it merely involves a straightforward application of Salberger’s techniques. However, since it does not appear in the literature and we aim to cover all cases, we have chosen to include it.
	Replicating the approach from the previous section in the projective setting would lead to a weaker bound.
	
	\begin{lemma}\label{lem:finalproj4r}
		Let $X$ be a smooth projective hypersurface defined over $\QQ$ of degree $d\geq 6$ in $\mathbb{P}^4$. Then, for all $\ve>0$,
		\[
		N(X,B)\ll_{d,\ve}B^{2+\ve}+B^{\frac{45}{16\sqrt{d}}+\frac{5}{4}+\ve}.
		\]
	\end{lemma}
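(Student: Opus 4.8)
The plan is to split off the contribution of low-degree curves and to treat the two resulting pieces by entirely different tools. Write $N(X,B)=N_4(X,B)+N(Z,B)$, where $Z\subseteq X$ is the union of all algebraic curves on $X$ whose irreducible components have degree at most $3$, and $N_4(X,B)$ counts the points of bounded height lying on no such curve. Since $d\geq 6>n+1$, Lemma~\ref{lem:remlines} applies and yields $N(Z,B)\ll_{d,\ve}B^{2+\ve}$; this is exactly the first term of the claimed bound, and it is the dominant one precisely when $d$ is large enough that $\tfrac{45}{16\sqrt d}+\tfrac54<2$. It therefore remains to establish $N_4(X,B)\ll_{d,\ve}B^{\frac{45}{16\sqrt d}+\frac54+\ve}$.

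For the remaining points I would invoke Salberger's global determinant method for the threefold $X$, exactly as developed in \cite[Section 3]{salAENS}. The method produces a family of auxiliary hypersurfaces, of some degree $D$ at our disposal and none containing $X$, whose sections with $X$ cover all rational points of height at most $B$. By Lemma~\ref{lem:irr} and Lemma~\ref{lem:NF}, after discarding the bounded number of degenerate sections and splitting into geometrically irreducible components, these sections are irreducible surfaces $S$ of degree of order $dD$. On each such $S$ I would bound the points lying off the curves of degree at most $e-1$ by the first part of Lemma~\ref{lem:projsurf}, namely by $B^{3/\sqrt{\deg S}+\ve}+B^{3/(2\sqrt{\deg S})+2/e+\ve}$, while the points on curves of degree between $4$ and $e-1$, together with the one-dimensional intersection loci of consecutive sections generated by the method, would be controlled by Lemma~\ref{lem:poncurves}. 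Summing over the family, and using the standard estimate $\#X(\F_p)\ll_d p^{3}$ for the number of residues entering the determinant method, gives a bound of the shape $B^{s(D)+\ve}\bigl(B^{3/\sqrt{dD}}+B^{3/(2\sqrt{dD})+2/e}\bigr)$ together with a curve term, where $s(D)$ is the exponent governing the number of sections.

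The decisive step is then the optimisation of the two free parameters $D$ (equivalently $\deg S$) and $e$. Taking $e$ as large as the bound $e\leq\deg S-2$ permits renders $B^{2/e}$ negligible, so that each section contributes essentially $B^{3/\sqrt{dD}}$; balancing this saving against the growth $s(D)$ of the number of sections, and against the curve contribution from Lemma~\ref{lem:poncurves}, is what forces the exponent $\tfrac{45}{16\sqrt d}+\tfrac54$. This is the step where the projective machinery of \cite[Section 3]{salAENS} is genuinely more efficient than transporting the affine Proposition~\ref{lem:aff4r} to $\PP^4$: the latter would only yield a bound governed by $\sqrt[3]{d}$, whereas here the sections have degree growing with $d$, so the surface estimate of \cite{salberger} extracts the stronger $\sqrt d$ dependence.

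I expect the main difficulty to be bookkeeping rather than conceptual. The work lies in carrying out the two-parameter optimisation so that the constants align to $\tfrac{45}{16}$ and $\tfrac54$, and in checking that every exceptional configuration is absorbed into the stated error: reducible or non-reduced sections and sections of dimension strictly less than $2$ are handled by the trivial bound \eqref{eq:trivialproj} together with Lemma~\ref{lem:NF}, while the singular and low-degree-curve loci are handled by Lemma~\ref{lem:poncurves} as in the proof of Lemma~\ref{lem:projsurf}. Since every ingredient is already available in \cite[Section 3]{salAENS} and in the surface estimate underlying Lemma~\ref{lem:projsurf}, no geometric input beyond Lemma~\ref{lem:remlines} should be required.
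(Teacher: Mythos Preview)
Your opening move---splitting off the curves of degree at most $3$ via Lemma~\ref{lem:remlines} to produce the $B^{2+\ve}$ term---matches the paper exactly. After that, however, your route diverges from the paper's, and I do not see how your optimisation can land on the constants $\tfrac{45}{16}$ and $\tfrac54$.

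The paper does \emph{not} run a determinant method on the threefold with a free degree parameter $D$. Instead it slices $X$ by \emph{hyperplanes}: Siegel's Lemma furnishes $O(B^{5/4})$ hyperplanes $H$ covering all points of height $\le B$, and each $X\cap H$ is a degree-$d$ surface (not degree $dD$). For the smooth sections one applies Lemma~\ref{lem:projsurf} to get $B^{3/\sqrt d+\ve}+B^{1/2+\ve}$ per section, and then the crucial extra input is the $15/16$ height-saving from \cite[Proof of Theorem~3.3]{salberger3fold}: because the Siegel hyperplanes have small coefficients, the effective height on each section is essentially $B^{15/16}$, so the per-section cost becomes $B^{(15/16)\cdot 3/\sqrt d}$. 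Multiplying by $B^{5/4}$ sections yields exactly $B^{45/(16\sqrt d)+5/4}$. The singular sections are handled separately: there are only $O_{d,\ve}(B^{1/((d-1)d^{1/3})+1/2})$ of them (from \cite[Theorem~3.6]{salAENS}), and the first part of Lemma~\ref{lem:projsurf} with $e=4$ suffices to show their total contribution is dominated.

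Your proposal omits all three of these specific mechanisms: the Siegel hyperplane count $B^{5/4}$, the $15/16$ saving, and the smooth/singular dichotomy for the sections. Running the global determinant method on the threefold with sections of degree $dD$ and optimising $D$ is a legitimate idea, but it is not what \cite[Section~3]{salAENS} does, and you give no computation showing it produces the exponent $\tfrac{45}{16\sqrt d}+\tfrac54$; indeed the factor $\tfrac{45}{16}=\tfrac{15}{16}\cdot 3$ and the $\tfrac54=\tfrac{n+1}{n}$ are artefacts of the hyperplane-plus-saving argument, not of a degree optimisation. Without identifying these inputs, the claimed balancing step is a gap.
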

	\begin{proof}
		This is basically \cite[Corollary 3.9]{salAENS}, but using Salberger's new version of the determinant method. So, we are just going to sketch the proof. The rational points of bounded height that lie on irreducible curves of degree at most $3$ are at most $O_{d,\ve}(B^{2+\ve})$ by Lemma \ref{lem:remlines}. 
		By Siegel's Lemma, the rational points of bounded height lie on a set of $O(B^{\frac{5}{4}})$ hyperplanes. We call the intersection of $X$ with one of these hyperplanes a hyperplane section. Given a smooth hyperplane section, the rational points of bounded height that lie on the smooth hyperplane section and not on irreducible curves of degree $\leq 3$ have cardinality
		\[
		O_{d,\ve}\left(B^{\frac{3}{\sqrt{d}}+\ve}+B^{\frac{1}{2}+\ve}\right)
		\]
		by Lemma \ref{lem:projsurf}. Since there are $O(B^{\frac{5}{4}})$ hyperplanes sections, rational points of bounded height that lie on smooth hyperplane sections and not on irreducible curves of degree $\leq 3$ have cardinality 
		\begin{equation}\label{eq:smoothsurf}
			O_{d,\ve}\left(B^{\frac{15}{16}\cdot\frac{3}{\sqrt{d}}+\frac{5}{4}+\ve}+B^{\frac{15}{16}\cdot\frac{1}{2}+\frac{5}{4}+\ve}\right)=O_{d,\ve}\left(B^{\frac{45}{16\sqrt{d}}+\frac{5}{4}+\ve}+B^{\frac{55}{32}+\ve}\right).
		\end{equation}
		The extra saving given by the multiplication by $15/16$ in the exponent can be obtained replicating \cite[Proof of Theorem 3.3]{salberger3fold}, see also \cite[Section 5]{salAENS}. 
		As is shown in the proof of Theorem 3.6 in \cite{salAENS}, rational points of bounded height lie on a set of singular hyperplane sections and this set has cardinality 
		\[
		O_{d,\ve}\left(B^{\frac{1}{(d-1)d^{1/3}}+\frac 12}\right).
		\]
		Thus, the rational points of bounded height that lie on geometrically reducible hyperplane sections and not on irreducible curves of degree $\leq 3$ have cardinality
		\[
		O_{d,\ve}\left(B^{\frac{1}{(d-1)d^{1/3}}+\frac{3}{2\sqrt{d}}+1+\ve}\right)
		\]
		by Lemma \ref{lem:projsurf}. We conclude by noticing that
		$$
		\frac{1}{(d-1)d^{1/3}}+\frac{3}{2\sqrt{d}}+1\leq \frac{45}{16\sqrt{d}}+\frac{5}{4},
		$$
		for $d\geq 2$.
	\end{proof} 
	
	Now, we are going to prove Theorem \ref{thm:mainaff} for $n=5$. For $n\geq 6$, we are going to prove Theorem \ref{thm:mainaff} by induction and this will be the base case.	
	Recall that $\theta(d)$ is defined in \eqref{eq:theta} and $\theta(d)\geq0$ for all $d\geq 6$.
	\begin{lemma}\label{lem:case5}
		Let $d\geq 6$.
		Let $Y$ be an affine hypersurface defined over $\QQ$ of degree $d$ in $\mathbb{A}^5$ such that the projective closure of $Y$ is smooth. Then, for all $\ve>0$,
		\[
		N_{\aff}(Y,B)\ll_{d,\ve} B^{2+\theta(d)+\ve}.
		\]
	\end{lemma}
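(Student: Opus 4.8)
The plan is to reduce the five–variable problem to the affine threefold case of Proposition~\ref{lem:finalaff4r} by slicing. Writing $Y_b = Y \cap \{x_5 = b\}$, each integral point of $Y$ of bounded height lies in exactly one slice, so $N_{\aff}(Y,B) = \sum_{|b|\le B} N_{\aff}(Y_b,B)$, a sum of $O(B)$ terms in which each $Y_b$ is a dimension-$3$ affine hypersurface in $\mathbb{A}^4$. First I would invoke Lemma~\ref{lem:B} to apply a linear change of variables with bounded integer coefficients; this alters $N_{\aff}$ and $B$ only by constants depending on $d$, and afterwards all but $O_d(1)$ of the slices (the good ones) have degree $d$ and smooth projective closure, and are geometrically irreducible by Lemma~\ref{lem:irr}.

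For a good slice I would separate the points that avoid every irreducible curve of degree $\le 3$ from those lying on such a curve. The former are counted by Proposition~\ref{lem:finalaff4r}, which yields $N_{\aff,4}(Y_b,B) \ll_{d,\ve} B^{1+\theta(d)+\ve}$; summing over the $O(B)$ good slices produces the target main term $B^{2+\theta(d)+\ve}$. The delicate contribution is that of the points lying on curves of degree $\le 3$, and the key idea is to bound these \emph{globally} rather than slice by slice. Indeed, a curve of degree $\le 3$ inside $Y_b$ is a curve of degree $\le 3$ on $Y$, so all such points lie on the subvariety $W \subseteq Y$ formed by the union of all curves on $Y$ whose irreducible components have degree $\le 3$; hence their total number over all slices is at most $N_{\aff}(W,B)$, which Lemma~\ref{lem:remaininglinesaff} (with $n=5$) bounds by $B^{2+\ve}$.

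Going global here is exactly what saves the argument, and this is where I expect the main difficulty to lie. A naive slice-by-slice treatment via Lemma~\ref{rem:counterexample} would, for cylindrical slices, incur the extra term $B\cdot N_{\aff}(C_b,B)\ll_{d,\ve} B^{1+\frac{1}{d}+\ve}$, and summing over $b$ would give $B^{2+\frac{1}{d}+\ve}$, which exceeds the desired bound precisely when $\theta(d)<\frac{1}{d}$, for instance for all $d\ge 50$ where $\theta(d)=0$. The ambient space $\mathbb{A}^5$ removes this loss: by Lemma~\ref{lem:cylsing} no irreducible dimension-$3$ subvariety of $Y$ can be cylindrical over a curve, and this is exactly the input that lets Lemma~\ref{lem:remaininglinesaff} deliver the clean exponent $n-3=2$ (through Lemma~\ref{lem:NF} and the affine dimension growth bound~\eqref{eq:affdimgrowth}). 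For $d\ge 7$ this is immediate from Lemma~\ref{lem:remaininglinesaff}; for $d=6$, where the hypothesis $d>n+1$ of that lemma is just missed, I would rerun its short proof, the union of degree-$\le 3$ curves still being a proper subvariety of $Y$.

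It remains to handle the $O_d(1)$ bad slices by a single cruder bound each. A bad $Y_b$ is an affine threefold in $\mathbb{A}^4$ of degree $\le d$; by Lemma~\ref{lem:NF} any dimension-$3$ irreducible component has degree $\ge d\ge 4$, so there is at most one and it has degree exactly $d$, it is not cylindrical over a curve by Lemma~\ref{lem:cylsing}, and thus~\eqref{eq:affdimgrowth} bounds its points by $B^{2+\ve}$, while the lower-dimensional components contribute $O_d(B^2)$ via the trivial bound~\eqref{eq:trivialaff} (the geometrically reducible case being absorbed into a proper subvariety as usual). Hence each bad slice gives $\ll_{d,\ve} B^{2+\ve}$, and the $O_d(1)$ of them total $\ll_{d,\ve} B^{2+\ve}\le B^{2+\theta(d)+\ve}$. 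Combining the three contributions gives $N_{\aff}(Y,B)\ll_{d,\ve} B^{2+\theta(d)+\ve}$, as required.
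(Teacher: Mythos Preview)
Your slicing strategy and the global treatment of degree-$\le 3$ curves via Lemma~\ref{lem:remaininglinesaff} match the paper's proof for $d \ge 7$. The gap is at $d=6$. You propose to ``rerun the short proof'' of Lemma~\ref{lem:remaininglinesaff}, asserting that the union $W$ of degree-$\le 3$ curves is still a proper subvariety of $Y$; but that properness, together with the crucial bound $\deg W = O_d(1)$, is supplied by \cite[Lemma~6.1]{salAENS}, whose hypothesis $d > n+1$ is genuinely used. With $d=6$ and $n=5$ you have $d = n+1$, and the projective closure of $Y$ is a Calabi--Yau fourfold for which neither claim is obvious. Without an independent argument for both the properness and the degree bound, this step is unjustified.

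The paper closes the $d=6$ case by noting that the very slice-by-slice approach you dismissed actually suffices here: applying Lemma~\ref{rem:counterexample} to each good slice gives $N_{\aff}(Y_b,B) \ll_{\ve} B^{1+\theta(6)+\ve} + B^{1+\frac{1}{6}+\ve}$, and summing over $O(B)$ slices yields $B^{2+\theta(6)+\ve} + B^{2+\frac{1}{6}+\ve}$, the second term being absorbed since $\theta(6) = 2/\sqrt{6} > \tfrac{1}{6}$. Your own analysis already explains why this is consistent: the naive approach fails only when $\theta(d) < \tfrac{1}{d}$, which first occurs for $d$ well above $6$, and by then $d > n+1$ holds and Lemma~\ref{lem:remaininglinesaff} applies directly.
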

	\begin{proof}
		First of all, we bound integral points of bounded height that do not lie on curves of degree at most $3$.
		Consider the affine hypersurface $Y$ defined by $f(x_1,\dots, x_5)=0$. Let $Y_b$ be the affine variety defined by $f(x_1,\dots, b)=0$ and let $\mathcal{B}$ be the set of integers such that the projective closure of $Y_b$ is singular or it has degree smaller than $d$. By Lemma \ref{lem:B}, we can assume that $\#\mathcal{B}\ll_{d,n}1$. If $b$ does not belong to $\mathcal{B}$, then the projective closure of $Y_b$ is smooth (and notice it has degree $d$) and we can apply Proposition \ref{lem:finalaff4r} to bound the points on $Y_b$. Fix now $b\in \mathcal{B}$.
		Notice that $Y_b$ is irreducible, of degree $d$, and not cylindrical over a curve by Lemmata \ref{lem:cylsing} and \ref{lem:irr}. By \eqref{eq:affdimgrowth} the integral points of bounded height on $Y_b$ are at most $B^{2+\ve}$. Therefore, by Proposition \ref{lem:finalaff4r},
		\[
		N_{\aff,4}(Y,B)\leq \sum_{0\leq \abs{b}\leq B} N_{\aff,4}(Y_b,B)\ll_{d,\ve} B^{2+\ve}+B\cdot B^{\theta(d)+1+\ve}.
		\]
		So, $N_{\aff,4}(Y,B)\ll_{d,\ve} B^{2+\theta(d)+\ve}$, since $\theta(d)\geq 0$. It remains to deal with points that lie on curves of degree at most $3$. If $d\geq 7$, we conclude by Lemma \ref{lem:remaininglinesaff} (notice $d>n+1$). If $d=6$, we repeat the induction above to get
		\[
		N_{\aff}(Y,B)\ll_{\ve} B^{2+\ve}+ \sum_{\substack{0\leq \abs{b}\leq B\\b\notin \mathcal{B}}} N_{\aff}(Y_b,B)
		\]
		and so, by Lemma \ref{rem:counterexample}, 
		\[
		N_{\aff}(Y,B)\ll_{\ve}B^{2+\ve}+B\cdot B^{\theta(6)+1+\ve}+B^{2+\frac{1}{6}+\ve}.
		\]
		We conclude by noticing that $\theta(6)+2\geq 2+1/6$.
	\end{proof}
	\begin{lemma}\label{lem:indaff}
		Let $d\geq 6$ and $n\geq 5$.
		Let $Y$ be an affine hypersurface defined over $\QQ$ of degree $d$ in $\mathbb{A}^n$ such that the projective closure of $Y$ is smooth. Then, for all $\ve>0$,
		\[
		N_{\aff}(Y,B)\ll_{n,d,\ve} B^{n-3+\theta(d)+\ve}.
		\]
	\end{lemma}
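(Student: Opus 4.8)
The plan is to prove Lemma~\ref{lem:indaff} by induction on $n$, with the base case $n=5$ already established in Lemma~\ref{lem:case5}. So assume $n\geq 6$ and that the statement holds in dimension $n-1$. The strategy is the same slicing argument used throughout this section: we fix the last variable and reduce to hypersurfaces in $\mathbb{A}^{n-1}$, to which the inductive hypothesis applies.

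\begin{proof}
	We argue by induction on $n$, the case $n=5$ being Lemma~\ref{lem:case5}. Assume $n\geq 6$ and that the result holds for affine hypersurfaces in $\mathbb{A}^{n-1}$ whose projective closure is smooth. Write $Y=V(f)$ with $f\in\QQ[x_1,\dots,x_n]$, and for $b\in\ZZ$ let $Y_b=Y\cap\{x_n=b\}$, regarded as a hypersurface in $\mathbb{A}^{n-1}$. By Lemma~\ref{lem:B} (applied after the linear change of variables it provides, which affects the counting function only up to the implied constant), there is a set $\mathcal{B}$ with $\#\mathcal{B}\ll_{n,d}1$ such that for every $b\notin\mathcal{B}$ the variety $Y_b$ has degree exactly $d$ and smooth projective closure. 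For such $b$ the inductive hypothesis yields
	\[
	N_{\aff}(Y_b,B)\ll_{n,d,\ve}B^{(n-1)-3+\theta(d)+\ve}=B^{n-4+\theta(d)+\ve}.
	\]
	Summing over the $O(B)$ admissible values of $b$ contributes $O_{n,d,\ve}(B^{n-3+\theta(d)+\ve})$, which is the desired bound.

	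It remains to handle the $O_{n,d}(1)$ exceptional slices $Y_b$ with $b\in\mathcal{B}$. For such a slice $Y_b\subseteq\mathbb{A}^{n-1}$ we cannot use smoothness of its projective closure, but we can still invoke the affine dimension growth bound \eqref{eq:affdimgrowth}. By Lemma~\ref{lem:irr}, $Y_b$ is irreducible of degree $d\geq 6\geq 4$; and since $n-1\geq 5$, an irreducible subvariety of dimension $(n-1)-1=n-2$ on $Y_b$ cannot be cylindrical over a curve by Lemma~\ref{lem:cylsing}. Hence $Y_b$ itself, being a hypersurface of dimension $n-2$ in $\mathbb{A}^{n-1}$, is not cylindrical over a curve, so \eqref{eq:affdimgrowth} gives
	\[
	N_{\aff}(Y_b,B)\ll_{n,d,\ve}B^{(n-2)-1+\ve}=B^{n-3+\ve}.
	\]
	Since there are only $O_{n,d}(1)$ such $b$ and $\theta(d)\geq 0$, their total contribution is also $O_{n,d,\ve}(B^{n-3+\theta(d)+\ve})$. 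Combining the two contributions completes the induction.
\end{proof}

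The routine checks are the verification that Lemma~\ref{lem:B} still applies after passing to the one-variable slice and that Lemma~\ref{lem:cylsing} is available in the required dimension range (which is exactly why the hypothesis $n\geq 5$ there, equivalently $n-1\geq 5$ here, forces us to treat $n=5$ as a separate base case). The one point requiring genuine care is the exceptional slices: because we have no control on the singularities of $Y_b$ when $b\in\mathcal{B}$, we must fall back on \eqref{eq:affdimgrowth}, and to apply it we need to certify both that $Y_b$ is geometrically irreducible of degree $\geq 4$ and that it is not cylindrical over a curve. I expect this non-cylindricity verification, via Lemmata~\ref{lem:irr} and~\ref{lem:cylsing}, to be the main structural obstacle, though the bound it produces is comfortably absorbed by the finitely many exceptional slices.
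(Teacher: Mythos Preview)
Your argument is essentially the paper's own: induction on $n$ with base case Lemma~\ref{lem:case5}, slicing by $x_n=b$, applying the inductive hypothesis to the good slices, and handling the $O_{n,d}(1)$ bad slices via \eqref{eq:affdimgrowth} after checking irreducibility and non-cylindricity. One correction: your invocation of Lemma~\ref{lem:cylsing} is misrouted. You apply it with $Y_b\subset\mathbb{A}^{n-1}$ as the ambient hypersurface, but in the exceptional case the projective closure of $Y_b$ is singular, so the hypothesis of Lemma~\ref{lem:cylsing} fails there. The correct application (and what the paper does in Lemma~\ref{lem:case5}) is to take the original $Y\subset\mathbb{A}^n$, which does have smooth projective closure and satisfies $n\geq 5$, and set $W=Y_b$, an irreducible subvariety of dimension $n-2$; then Lemma~\ref{lem:cylsing} directly gives that $Y_b$ is not cylindrical over a curve. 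With this adjustment your proof is complete and matches the paper's.
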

	\begin{proof}
		We prove the lemma by induction. We fix $n\geq 5$ and assume that the lemma has been proved for all $5\leq m<n$ (the case $m=5$ is done in Lemma \ref{lem:case5}).
		With an argument identical to the one in Lemma \ref{lem:case5}, we get
		\[
		N_{\aff}(Y,B)\leq \sum_{0\leq \abs{b}\leq B} N_{\aff}(Y_b,B)\ll_{n,d,\ve} B^{n-3+\ve}+B\cdot B^{n-4+\theta(d)+\ve}
		\]
		and easily conclude.
	\end{proof}
	\begin{lemma}\label{lem:indproj}
		Let $d\geq 6$ and $n\geq 5$.
		Let $X$ be a smooth projective hypersurface defined over $\QQ$ of degree $d$ in $\mathbb{P}^n$. Then, for all $\ve>0$,
		\[
		N(X,B)\ll_{n,d,\ve} B^{n-2+\theta(d)+\ve}.
		\]
	\end{lemma}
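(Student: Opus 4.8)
The plan is to induct on $n \geq 5$ and reduce the projective count to the affine count furnished by Lemma \ref{lem:indaff}, via the hyperplane-slicing device of \cite[Section 3]{BHB}. Write $X = \{F = 0\}$ with $F \in \mathbb{Z}[x_0,\dots,x_n]$ homogeneous of degree $d$ and non-singular, and split the primitive integer points of height at most $B$ according to whether the coordinate $x_0$ vanishes.

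The points with $x_0 = 0$ lie on $X \cap \{x_0 = 0\}$, which by Lemma \ref{lem:irr} is again a smooth projective hypersurface of degree $d$, now in $\mathbb{P}^{n-1}$. For $n \geq 6$ I would bound these by the inductive hypothesis, and for $n = 5$ by Lemma \ref{lem:finalproj4r}; in either case the contribution is $O(B^{n-3+\theta(d)+\varepsilon})$ (using $\tfrac{45}{16\sqrt d} + \tfrac54 < 3$ for $d \geq 6$), which is of strictly lower order than the target $B^{n-2+\theta(d)+\varepsilon}$, so this term is harmless.

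The main term comes from the points with $x_0 \neq 0$. For each integer $a$ with $0 < |a| \leq B$, the points with $x_0 = a$ yield integer points $(x_1,\dots,x_n)$ of height at most $B$ on the affine hypersurface $Y_a = \{F(a,x_1,\dots,x_n) = 0\} \subseteq \mathbb{A}^n$, giving $N(X,B) \leq N(X \cap \{x_0=0\},B) + \sum_{0 < |a| \leq B} N_{\aff}(Y_a, B)$. The crucial point—and the one step that needs care—is that $Y_a$ satisfies the hypotheses of Lemma \ref{lem:indaff} for \emph{every} $a \neq 0$. Writing $F = \sum_{j} x_0^j F_j(x_1,\dots,x_n)$ with $F_j$ homogeneous of degree $d-j$, the projective closure of $Y_a$ is cut out by $\sum_j (a x_0')^j F_j = F(a x_0', x_1,\dots,x_n)$; that is, it is the image of $X$ under the diagonal projective automorphism scaling $x_0$, hence smooth of degree $d$ for all $a \neq 0$. (Here $F_0 \neq 0$ since $X$ is irreducible, which also forces $\deg Y_a = d$.) In contrast with the affine slicing of Lemma \ref{lem:case5}, no exceptional set of $a$ arises, so Lemma \ref{lem:B} is not needed.

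With this in hand, Lemma \ref{lem:indaff} gives $N_{\aff}(Y_a, B) \ll_{n,d,\varepsilon} B^{n-3+\theta(d)+\varepsilon}$ uniformly in $a$, and summing over the $O(B)$ admissible values of $a$ yields $B^{n-2+\theta(d)+\varepsilon}$. Combined with the lower-order hyperplane-section term this closes the induction. The only genuine subtlety is the uniform smoothness of the slices $Y_a$, which the linear-image observation settles cleanly; the rest is bookkeeping of exponents.
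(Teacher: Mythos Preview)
Your treatment of the main term ($x_0\neq 0$) is correct and matches the paper's: the projective closure of $Y_a$ is the image of $X$ under the automorphism $[x_0:\cdots:x_n]\mapsto[a x_0:\cdots:x_n]$, hence smooth of degree $d$, and Lemma~\ref{lem:indaff} applies uniformly in $a$. Your observation that no exceptional $a$ arises is exactly right.

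The gap is in your handling of the hyperplane section $X\cap\{x_0=0\}$. Lemma~\ref{lem:irr} asserts only that this section is \emph{irreducible} of degree $d$; it says nothing about smoothness, and in general a hyperplane section of a smooth hypersurface need not be smooth. For instance, $F=x_0^d+x_0x_1^{d-1}+x_2^d+\cdots+x_n^d$ defines a smooth $X$, but $X\cap\{x_0=0\}$ is given by $x_2^d+\cdots+x_n^d=0$ in $\PP^{n-1}$, which is singular at $[1:0:\cdots:0]$. Hence neither the inductive hypothesis nor Lemma~\ref{lem:finalproj4r} applies as stated, and your induction does not close.

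The fix is exactly what the paper does: since $X\cap\{x_0=0\}$ is geometrically irreducible of degree $d\geq 4$ in $\PP^{n-1}$ (by Lemma~\ref{lem:irr}), the dimension growth bound~\eqref{eq:dimgrowth} gives $N(X\cap\{x_0=0\},B)\ll_{n,d,\ve}B^{n-2+\ve}$, and this is absorbed into the target because $\theta(d)\geq 0$. With this replacement the induction on $n$ becomes unnecessary---the argument is complete for each $n\geq 5$ directly.
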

	\begin{proof}
		Assume that $X$ is defined by $F(x_0,x_1,\dots,x_n)=0$. Let $f_b(x_1,\dots,x_n)=F(b,x_1,\dots,x_n)$ for $b\neq 0$. If $\deg f_b<d$, then $x_0\mid F$ and so $X$ would be singular. So, $\deg f_b=d$ and the projective closure of $f_b$ is smooth. For $b\neq 0$, let $Y_b=X\cap \{x_0=b\}$, which is an affine hypersurface. Whence,
		\[
		N(X,B)\leq N(\{F=0\}\cap \{x_0=0\},B)+\sum_{1\leq \abs{b}\leq B} N_{\aff}(Y_b,B).
		\]
		By \eqref{eq:dimgrowth} and Lemma \ref{lem:indaff}, 
		\[
		N(\{F=0\}\cap \{x_0=0\},B)\ll_{n,d,\ve} B^{n-2+\ve}
		\]
		and 
		\[
		N_{\aff}(Y_b,B)\ll_{n,d,\ve} B^{n-3+\theta(d)+\ve}.
		\]
		Therefore, we conclude by recalling $\theta(d)\geq 0$.
	\end{proof}
	The proof of Theorems \ref{thm:main} and \ref{thm:mainaff} is now completed, we summarise it. The projective case for $n=4$ is done in Lemma \ref{lem:finalproj4r}. The affine case for $n\geq 5$ is done in Lemma \ref{lem:indaff}. The projective case for $n\geq 5$ is done in Lemma \ref{lem:indproj}.

\end{document}